\newtheorem{theorem}{Theorem} 
\newtheorem{example}{Example}
\newtheorem{proposition}{Proposition}
\newtheorem{lemma}{Lemma}
\newtheorem{remark}{Remark}
 \renewcommand{\(}{\left(}
\renewcommand{\)}{\right)}
\renewcommand{\[}{\left[}
\renewcommand{\]}{\right]}
\newcommand{\eps}{\epsilon}
\newcommand{\rr}{ \mathbb{R}}
\begin{document}
\title[Boundary-layers  for a Neumann problem at  higher critical exponents]{Boundary-layers  for a Neumann problem at higher critical exponents}
 
\author{Bhakti B. Manna}
\address{Bhakti B. Manna, Instituto de Matem\'{a}ticas, Universidad
Nacional Aut\'{o}noma de M\'{e}xico, Circuito Exterior, C.U., 04510 M\'{e}%
xico D.F., Mexico}
\email{mannab@matem.unam.mx}
\author{Angela Pistoia}
\address{Angela Pistoia, Dipartimento di Metodi e Modelli Matematici,
Universit\`a di Roma ``La Sapienza'', via Antonio Scarpa 16, 00161 Roma,
Italy}
\email{angela.pistoia@uniroma1.it}

\subjclass[2010]{35J20,  35J60 }
\keywords{supercritical problem,  blowing-up solutions, boundary  layer}

\begin{abstract}
We consider the  Neumann problem
$$(P)\qquad
- \Delta v +  v= v^{q-1}   \  \text{in }\ \mathcal{D}, \   v > 0 \  \text{in } \ \mathcal{D},\  \partial_\nu v = 0 \ \text{on } \partial\mathcal{D}
,$$
where $\mathcal{D} $ is an open bounded domain in $\rr^N,$     $\nu$ is the unit inner normal at the boundary
and  $q>2.$ 
For any integer,   $1\le h\le N-3,$
we show that, in some
suitable domains $\mathcal D,$  problem $(P)$ has a solution which blows-up along a $h-$dimensional minimal submanifold of the boundary  
$\partial\mathcal D$
as $q$ approaches from either below or above the higher critical Sobolev exponent ${2(N-h)\over N-h-2}.$
\end{abstract}
\maketitle

\section{Introduction}
We are interested  in    the classical Neumann problem
\begin{equation}\label{P}
-\varepsilon^2 \Delta v +  v= v^{q-1}   \  \text{in }\ \mathcal{D}, \quad  v > 0 \quad \text{in } \ \mathcal{D},\quad  \partial_\nu v = 0 \ \text{on } \partial\mathcal{D}
\end{equation}
where $\mathcal{D} $ is an open bounded domain in $\rr^N,$  $\varepsilon $ is a positive parameter,  $q>2 $ and $\nu$ is the unit inner normal at the boundary.
\\

Problem \eqref{P} has deserved a lot of attention in the last decades, because it is a model for different problems in applied science.
It arises for instance as the  shadow system  associated to
activator-inhibitor systems in mathematical theory of biological
pattern formation such as the Gierer-Meinhardt model  \cite{gm} and in
the Keller-Segel model  of chemotaxis \cite{ks}.
A challenging feature of solutions to  \eqref{P}  is that they exhibit {  concentration phenomena} as either the parameter $\varepsilon$ approaches zero or
the esponent {$q$} approaches some critical values.\\

\subsection{The singularly perturbed problem, i.e. $\boldsymbol{\varepsilon\to 0}$}

In the subcritical case, i.e. $q<{2N\over N-2}$  problem \eqref{P} has a least energy solution which  is obtained by minimizing
    the
Rayleigh quotient
\begin{equation}
Q(u) \, =\, {\varepsilon^2 \int_{\mathcal{D}} |\nabla u|^2 + \int_{\mathcal{D}} |u|^2
\over
 (\, \int_{\mathcal{D}} |u|^{q}  )^{2\over q} } ,\quad u\in H^1({\mathcal{D}})\setminus \{0\} ,
\label{Q}\end{equation} for small {$\varepsilon$}. 
In a series of papers Lin, Ni and Takagi {\cite{lin2,nt1,nt2}} proved that
if $\varepsilon$ is   small enough the least energy solution
has a unique local maximum point $\xi_\varepsilon$ which is located on the
boundary $\partial {\mathcal{D}}$ and  approaches as $\varepsilon$ goes to zero the maximum of the mean curvature of the boundary, i.e. $H(\xi_\varepsilon) \to \max_{\xi\in \partial {\mathcal{D}}} H(\xi)$ as $\varepsilon\to0$. Here and in the following
$H$ denotes mean curvature of $\partial{\mathcal{D}}.$   Moreover,
this solution decays exponentially far away from the maximum point which implies indeed the
presence of a very sharp, bounded spike for the solution around
{$\xi_\varepsilon$}.  
Higher energy solutions with similar qualitative
behavior have been found by several authors: 
solutions with boundary peaks  in  {\cite{dy,DFW,g1,gww,li,w2}},
with interior peaks in  \cite{bf,cw,gp,gpw,gw1,k,w1,y} or with both  boundary and interior peaks in  \cite{gw2}.
In particular, we quote the result   in \cite{w2}, where the author proved that such a spike solution
exists around any non-degenerate critical point of the mean curvature.
\\

Phenomena of this type occurs as well in the critical case
$q=\frac{2N}{N-2}$, however several important differences are
present. For instance, since compactness of the embedding of
$H^1(\mathcal D)$ into $L^{q}(\mathcal D)$ is lost, existence of
minimizers of $Q(u)$ becomes non-obvious.
 (and in general not true
for large $\varepsilon$ as   established in \cite{Lin}). It is the
case however, as shown in \cite{am1,wz1}, that such a minimizer does
exist if $\varepsilon$ is sufficiently small. The
profile and asymptotic behavior of this least energy solution has
been analyzed in \cite{apy1,apy2,npt,r5}. Again the least energy solution has only one local maximum
point   located around a point of maximum mean curvature of
$\partial\Omega.$ Unlike the subcritical case, the $L^\infty-$norm of the least energy solution is unbounded as $\eps$ goes to zero.
 Construction of solutions with this
type of   blowing-up behavior  around one or more critical points
of the mean curvature has been achieved for instance in
{\cite{am2,amy,g,gg,gl,lww,r2,r5,wxj,wz1,wz2,wz3,wy}}. An important difference with the
subcritical case is that now mean curvature is required to be
positive at these critical points. Indeed, non-negativity of
curvature is actually necessary for existence
\cite{gl,r5}. Moreover, in contrast with the subcritical situation, there are no solutions which   blow-up only in interior points, i.e. at least one blow-up point has to lie on the boundary as established in \cite{cny,r2}.

\subsection{The almost critical problem, i.e. $\boldsymbol {q\to{2N\over N-2}}$ if $\boldsymbol {N\ge3}$ or $ \boldsymbol  {q\to+\infty}$ if $  \boldsymbol   {N=2}$}

  The first result in this direction is obtained in  \cite{cn}, where the authors  proved that if $N\ge 4$ and if   the exponent $q$ 
approaches the critical exponent {\em from below}, i.e. $q={2N\over N-2}+\epsilon$ with $\epsilon$ small and negative, then
 there exists a    solution blowing-up at  
points located on the boundary  near critical points of the mean
curvature with {\em negative value}.  

Recently, Rey and Wei in \cite{rw2} and del Pino, Musso and Pistoia \cite{dmp} proved that  if $N\ge 4$ and if   the exponent {$q$} 
approaches the critical exponent {\em from above}, i.e. $q={2N\over N-2}+\epsilon$ with $\epsilon$ small and positve, then
 there exists a   solution  blowing-up at
points located on the boundary near critical points of the mean
curvature with {\em positive value}.
The case $N=3$ is more involved and it has been treated by Rey and Wei in \cite{rw1}.

If $N=2$ the exponent $q$ is allowed to go to $+\infty$ and Musso and Wei in \cite{mw} constructed    solutions concentrating in  interior and  boundary points, whose location is  determined by a suitable combination of  Green's functions.

\subsection{Concentration along higher dimensional sets}

It is natural to look for solutions to problem \eqref{P} that exhibit concentration phenomena
   not just at points but on higher dimensional subsets of $\overline {\mathcal D}$ as suggested by Ni in \cite{n}.\\

It is useful to introduce the {\em $h-$th critical exponent.} Given an integer $h $ we define
$$q^*_{h}:={2(N-h)\over N-h-2}\ \hbox{if}\ 0\le h\le N- 3\quad\hbox{and}\quad q^*_{N-2}:=+\infty\ \hbox{if}\  h=N-2.$$
$q^*_{h}$ is nothing but the critical Sobolev exponent of the embedding $H^1(\Omega)\hookrightarrow L^{q^*_h}(\Omega)$ where $\Omega$ is a smooth 
bounded domain in $R^{N-h}.$
We also note that $q^*_0={2N\over N-2}.$

In particular, given  $\Gamma$ a $h-$dimensional submanifold of 
the boundary and assuming that   $1\le h\le N-3$
and $q\le q^*_h,$ the question is  whether there exists a solution to \eqref{P} which concentrates along $\Gamma$ as $\varepsilon\to0.$

In \cite{mama,mamo1,mamo2,ma} the authors have established the existence of such a  solution (for a suitable sequence of parameters 
$\varepsilon_i\to0$) in the $h-$subcritical case, i.e. $q<q^*_h$, when either $\Gamma$ is the whole boundary  or $\Gamma$ is an embedded closed minimal submanifold of $\partial\mathcal D$
which is in addition nondegenerate in the sense that its Jacobi operator is nonsingular. 

Recently, in \cite{dmm} the result has been extended to the $h-$critical case, i.e. $q=q^*_h$.  The authors assume that $\Gamma$ is an embedded closed minimal
submanifold of $\partial\mathcal D$ whose dimension is $h\le N-7$ (in particular, $N\ge8$) which is nondegenerate, and a certain weighted average 
of sectional curvatures of the boundary $\partial\mathcal D$ is 
positive along $\Gamma$. Then they  prove the existence of a solution to problem \eqref{P} for a suitable sequence of parameters 
$\varepsilon_i\to0$ which blows-up along $\Gamma.$
\\

As far as we know, there are not any results about existence of this kind of solutions to problem \eqref{P} when the parameter $\varepsilon$ 
is fixed (say $\varepsilon=1$)
and the exponent $q$ is allowed to approaches the higher critical exponents $q^*_h.$ In particular, a natural question arises

\begin{itemize}
\item[(Q)]
   {\em for any integer $h=1,\dots,N-3$, if $q $ approaches   $q^*_h$, does problem 
   \eqref{p} have a positive solution  which blows-up along a suitable $h-$dimensional minimal  submanifold of the boundary 
   $\partial\mathcal{D}?$}
  \end{itemize}
  
In the present paper,     we give a positive answer   when the domain   $\mathcal{D}$ has some rotational symmetry.

\subsection{Our result}

 Let $n\ge1 $ and $n\ge m\ge1$ be fixed integers.   Let $\Omega $ be a smooth open bounded domain in $\rr^n $ such that
$$\overline {\Omega} \subset \{\left( x_{1},\ldots ,x_{m},x^{\prime
}\right) \in \mathbb{R}^{m}\times \mathbb{R}^{n-m}:x_{i}>0,\text{ }i=1,\ldots
,m\}.
$$
Let $M=M_{1}+\cdots +M_{m},$ $M_{i}\geq 2,$ and set
\begin{equation*}
\mathcal{D}:=\{(y_{1},\dots ,y_{m},x^{\prime })\in \mathbb{R}^{M_{1}}\times
\cdots \times \mathbb{R}^{M_{m}}\times \mathbb{R}^{n-m}:\left( \left\vert
y_{1}\right\vert ,\ldots ,\left\vert y_{m}\right\vert ,x^{\prime }\right)
\in \Omega\}.
\end{equation*}%
Then $\mathcal{D}$ is a smooth bounded domain in $\mathbb{R}^{N}$, $N:=M+n-m.$
 Set $h:=M-m,$ so that $N-h=n.$

The solutions we are looking for are $\mathcal{G}-$invariant for the action of
the group $\mathcal{G}:=\mathcal{O}(M_1)\times\dots\times\mathcal{O}(M_m)$ on $\rr^N$ given by
\begin{equation*}
(g_{1},\dots ,g_{m})(y_{1},\dots ,y_{m},x^{\prime }):=(g_{1}y_{1},\dots
,g_{m}y_{m},x^{\prime }).
\end{equation*}%
Here $\mathcal{O}(M_i)$ denotes the group of linear isometries of $\rr^{M_i}.$

A simple calculation shows that a function $v$ of the form $v(y_{1},\dots
,y_{m},x^{\prime })=u\left( \left\vert y_{1}\right\vert ,\ldots ,\left\vert
y_{m}\right\vert ,x^{\prime }\right) $ solves problem 
\begin{equation} \label{p}
-\Delta v+v= v^{q-1}\  \text{in}\ \mathcal D,\qquad  v>0\  \text{in}\
\mathcal D,\qquad \partial_\nu v=0\  \text{on}\
\partial \mathcal D
\end{equation}%
if $u$ solves
\begin{equation} \label{p11}
-\Delta u+\sum_{i=1}^{m}\frac{M_{i}-1}{x_{i}}\frac{\partial u}{\partial
x_{i}}+u= u^{q-1}\  \text{in}\ \Omega,\qquad  u>0\  \text{in}\
 \Omega,\qquad \partial_\nu u=0\  \text{on}\
\partial \Omega.
\end{equation}%
Here  $q=2^*_h={2n\over n-2}+\epsilon $ where  $\epsilon$ is a small {\em positive} or {\em negative}  parameter. 
Thus, we are lead to study the more general anisotropic problem
\begin{equation}
-\text{div}(a(x)\nabla u)+a(x)u=a(x)u^{q-1}\  \text{in}\ \Omega,\qquad  u>0\  \text{in}\
 \Omega,\qquad \partial_\nu u=0\  \text{on}\
\partial \Omega,\label{p1}
\end{equation}
where $\Omega $ is a bounded smooth domain in $\mathbb{R}^{n},$  $a\in C^{2}(\overline{\Omega })$ and 
$\min\limits_{x\in \overline{\Omega }}a(x)>0.
$
Note that if
\begin{equation}\label{a}
a(x_{1},\ldots ,x_{m}):=x_{1}^{M_{1}-1}\cdots x_{m}^{M_{m}-1},
\end{equation}
problem \eqref{p1} reduces to problem \eqref{p11}.

Our goal is to construct solutions to problem \eqref{p1}   which blow-up at a suitable critical point $\xi _{0}$
of $a$ constrained on the boundary  $\partial\Omega$ as $\epsilon$ goes to $0.$ It corresponds
to construct solutions to  problem \eqref{p}    which   blow-up along the $\mathcal{G}-$orbit  $\Xi(\xi_0)$ of $\xi _{0}$ lying on the boundary $\partial\mathcal D$  as  $\epsilon$ goes to $0.$ Here
\begin{equation}\label{orbit}
\Xi(\xi_0):=\{(y_{1},\dots,y_{m},x^{\prime })\in \partial D:\left( \left\vert
y_{1}\right\vert ,\dots,\left\vert y_{m}\right\vert ,x^{\prime }\right)=\xi_0
\in \partial\Omega\}
\end{equation} 
  is a $h-$dimensional  minimal submanifold  of the boundary of $\mathcal{D} $
  diffeomorphic to $ \mathbb{S}^{M_1-1}\times\dots,\times\mathbb{S}^{M_m-1}  $ (recall that $M-m=h$),
 where $\mathbb{S}^{M_i-1}$ is the unit sphere in $\rr^{M_i} . $

Set
\begin{equation}\label{ha}{\mathcal H_a(\xi)}:= {2\over n-1}{\partial_\nu a(\xi) \over a(\xi)}
- H(\xi)  .\end{equation}

 Our main result is the following.

\begin{theorem}
\label{main1}  Let $N-h\ge5.$  Assume $\xi_0$ is a  $C^1$-stable critical point of $a$ constrained on the boundary.
\begin{itemize}
\item[$(i)$] 
If $\mathcal H_a(\xi_0)<0$ there
exists $\epsilon _{0}>0$ such that for each $\epsilon \in (0,\epsilon _{0})$
problem \emph{(\ref{p})} has
     a positive solution $v_{\epsilon }$ which
blows-ip along $\Xi(\xi_0)$ (see \eqref{orbit}) as $\epsilon \rightarrow 0.$
\item[$(ii)$]  
If $\mathcal H_a(\xi_0)>0$ there
exists $\epsilon _{0}>0$ such that for each $\epsilon \in ( -\epsilon _{0},0)$
problem \emph{(\ref{p})} has
     a positive solution $v_{\epsilon }$ which
blows-up along $\Xi(\xi_0)$ (see \eqref{orbit}) as $\epsilon \rightarrow 0.$
\end{itemize}\end{theorem}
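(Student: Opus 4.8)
Since $N-h=n$, a $\mathcal{G}$-invariant solution of \eqref{p} blowing-up along the orbit $\Xi(\xi_0)$ in \eqref{orbit} corresponds to a solution of the anisotropic problem \eqref{p1} on the $n$-dimensional domain $\Omega$ concentrating at the boundary point $\xi_0\in\partial\Omega$, so the plan is to produce the latter by a finite-dimensional Lyapunov--Schmidt reduction. I would work variationally on $H^1(\Omega)$ equipped with the weighted inner product $\langle u,v\rangle_a=\int_\Omega a\,(\nabla u\cdot\nabla v+uv)$, for which the solutions of \eqref{p1} are exactly the critical points of
$$J_\epsilon(u)=\frac12\int_\Omega a\,(|\nabla u|^2+u^2)-\frac1q\int_\Omega a\,u_+^q,\qquad q=\frac{2n}{n-2}+\epsilon.$$

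\emph{Approximate solution.} First I would build the ansatz from the Aubin--Talenti bubble $U_{\delta,\xi}(x)=\delta^{-(n-2)/2}U\big((x-\xi)/\delta\big)$, $U(y)=\alpha_n(1+|y|^2)^{-(n-2)/2}$, centered at a point $\xi\in\partial\Omega$ close to $\xi_0$. Since $\xi$ lies on the boundary and the condition is of Neumann type, after flattening $\partial\Omega$ the half-bubble already satisfies the homogeneous Neumann condition on the tangent hyperplane; the curvature of $\partial\Omega$ and the weight $a$ are then accounted for through the projection $PU_{\delta,\xi}$, defined as the solution of the linear problem $-\div(a\nabla PU_{\delta,\xi})+a\,PU_{\delta,\xi}=a\,U_{\delta,\xi}^{(n+2)/(n-2)}$ in $\Omega$ with $\partial_\nu PU_{\delta,\xi}=0$. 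The first quantitative task is to expand $PU_{\delta,\xi}-U_{\delta,\xi}$ and to estimate the residual $J_\epsilon'(PU_{\delta,\xi})$ in the dual of the weighted space, showing it is small as $\delta,|\epsilon|\to0$; here the hypothesis $n=N-h\ge5$ is used to guarantee that the contribution of $\int U_{\delta,\xi}^2=O(\delta^2)$ is negligible relative to the $O(\delta)$ corrections below.

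\emph{Linear theory and reduction.} The linearised operator at $PU_{\delta,\xi}$ possesses an approximate kernel spanned by the Jacobi fields $\partial_\delta PU_{\delta,\xi}$ and $\partial_{\xi_j}PU_{\delta,\xi}$, with $\xi_j$ ranging over the $(n-1)$ tangential directions along $\partial\Omega$, arising from the dilation and boundary-translation invariances of the limiting problem. I would show that this operator is uniformly invertible on the $\langle\cdot,\cdot\rangle_a$-orthogonal complement $K^\perp_{\delta,\xi}$ of the kernel, and then solve the infinite-dimensional part of the equation by a contraction mapping, obtaining a remainder $\phi=\phi_\epsilon(\delta,\xi)\in K^\perp_{\delta,\xi}$ with $\|\phi\|_a\le C\,\|J_\epsilon'(PU_{\delta,\xi})\|$. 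Writing $u=PU_{\delta,\xi}+\phi_\epsilon(\delta,\xi)$, the problem is reduced to finding critical points of the finite-dimensional reduced energy $\widetilde J_\epsilon(\delta,\xi):=J_\epsilon\big(PU_{\delta,\xi}+\phi_\epsilon(\delta,\xi)\big)$, any such critical point giving a genuine solution of \eqref{p1}.

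\emph{Energy expansion and conclusion.} The heart of the matter is the asymptotic expansion of $\widetilde J_\epsilon$, which I expect to read
$$\widetilde J_\epsilon(\delta,\xi)=a(\xi)\Big[\,c_0+c_1\,\mathcal H_a(\xi)\,\delta+c_2\,\epsilon\,\log\tfrac1\delta+o\big(\delta+|\epsilon|\big)\Big],$$
with $c_0,c_1>0$ and $c_2\neq0$, where the combination $\mathcal H_a$ of \eqref{ha} appears precisely because the boundary curvature produces the term $-H(\xi)\,\delta$, while the change of variables in $\int a\,U_{\delta,\xi}^q$ produces the normal-derivative term $\tfrac{2}{n-1}\tfrac{\partial_\nu a(\xi)}{a(\xi)}\,\delta$ (its tangential counterpart vanishing since $\xi_0$ is a constrained critical point of $a$), the exponent perturbation contributing the factor $\delta^{-(n-2)\epsilon/2}=1-\tfrac{n-2}{2}\epsilon\log\delta+\cdots$. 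Producing this expansion, with the curvature and weight corrections tracked to the correct order, is the main obstacle. Granting it, the equation $\partial_\delta\widetilde J_\epsilon=0$ forces $\delta\sim \epsilon/\mathcal H_a(\xi)$ and admits a positive solution precisely when $\operatorname{sign}\epsilon=-\operatorname{sign}\mathcal H_a(\xi_0)$, which is the dichotomy distinguishing case $(i)$ ($\mathcal H_a(\xi_0)<0$, $\epsilon>0$) from case $(ii)$ ($\mathcal H_a(\xi_0)>0$, $\epsilon<0$). Finally, since the $\xi$-dependence of $\widetilde J_\epsilon$ is governed at leading order by the positive multiple $c_0\,a(\xi)$, the $C^1$-stability of $\xi_0$ as a constrained critical point of $a$ on $\partial\Omega$ survives the small perturbation and yields, via a local-degree argument, a critical point $(\delta_\epsilon,\xi_\epsilon)$ with $\xi_\epsilon\to\xi_0$ and $\delta_\epsilon\to0$. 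This produces the desired blowing-up solution, completing the proof.
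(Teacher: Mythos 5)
Your overall architecture coincides with the paper's: the symmetry reduction from \eqref{p} to the anisotropic problem \eqref{p1} in dimension $n=N-h$, a boundary-bubble ansatz $PU_{\delta,\xi}+\phi$ with $\xi\in\partial\Omega$, a contraction-mapping solution of the orthogonal part of the equation, a reduced-energy expansion in which the combination $\mathcal H_a$ of \eqref{ha} appears as the coefficient of the blow-up parameter (the paper's $c_4\mathcal H_a(\xi)|\epsilon|d$, with $\delta=|\epsilon|d$), and a Brouwer-degree argument exploiting the $C^1$-stability of $\xi_0$. There is, however, one genuine gap: you propose to work ``variationally on $H^1(\Omega)$'', but in case $(i)$ the exponent $q=\frac{2n}{n-2}+\epsilon$ with $\epsilon>0$ is \emph{supercritical}, the embedding $H^1(\Omega)\hookrightarrow L^q(\Omega)$ fails, and on $H^1$ alone the functional $J_\epsilon$ is not even well defined (let alone $C^1$), the Nemytskii map $u\mapsto (u^+)^{p+\epsilon}$ does not send $L^{2n/(n-2)}$ into $L^{2n/(n+2)}$, and the fixed-point argument producing $\phi$ cannot close. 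This is precisely why the paper introduces the space $H_\epsilon=H^1(\Omega)\cap L^{s_\epsilon}(\Omega)$ with $s_\epsilon=\frac{2n}{n-2}+\epsilon\frac{n}{2}$ for $\epsilon>0$, normed by $\Vert u\Vert+|u|_{s_\epsilon}$, together with the regularity estimate \eqref{i2} for $i^*$; Propositions \ref{pro1} and \ref{pro2} and the error bound of Lemma \ref{re1} are all formulated in $\Vert\cdot\Vert_{H_\epsilon}$. Your sketch contains no substitute for this ingredient, and without it the reduction fails exactly in the regime $\mathcal H_a(\xi_0)<0$, $\epsilon>0$ of item $(i)$.

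Two further points. First, you leave the sign of the coefficient $c_2$ of $\epsilon\log(1/\delta)$ undetermined ($c_2\neq 0$), but the dichotomy $\operatorname{sign}\epsilon=-\operatorname{sign}\mathcal H_a(\xi_0)$ that you then assert is \emph{equivalent} to that sign: in your normalization $\partial_\delta\widetilde J_\epsilon=0$ gives $\delta=c_2\epsilon/(c_1\mathcal H_a(\xi))$, which is positive under your stated dichotomy only if $c_2<0$, i.e.\ only if the coefficient of $+\epsilon\log\delta$ is positive --- this is the paper's assertion $c_5>0$ in Proposition \ref{pro4}, imported from the Rey--Wei expansion; with the opposite sign, items $(i)$ and $(ii)$ would swap, so a complete proof must compute it. Second, a misattribution: the tangential component of the $\nabla a$-contribution at order $\delta$ vanishes for \emph{every} $\xi\in\partial\Omega$ by oddness of the rescaled bubble in the tangential variables (see the computation of $I_2$ in the Appendix, estimate \eqref{j4}, where only the $y_n$-moment survives), not because $\xi_0$ is a constrained critical point of $a$; the criticality and stability of $\xi_0$ enter only at the final step, through $\nabla_\xi\widetilde J_\epsilon(d,\xi)=c_1\nabla_\xi a(\xi)+o(1)$. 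The remaining deviations are harmless variants: you project with the weighted operator $-\div(a\nabla\cdot)+a$, whereas the paper uses the unweighted projection $PU_{\delta,\xi}$ and estimates the commutator term $\frac{\nabla a}{a}\nabla PU_{\delta,\xi}$ inside the error $R_{d,\xi}$ (estimate \eqref{I3}, where $n\ge5$ is used); and you take as approximate kernel the dilation plus the $n-1$ tangential translations, while the paper keeps all $n+1$ functions $Z^j_{\delta,\xi}$.
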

 
 We recall that any strict local maximum point or strict local minimum point or non-degenerate critical point is a  $C^1$-stable critical point of $a$ (see for instance \cite{li}).
\\ 

\begin{remark}We point out that   the sign of $\mathcal H_a(\xi_0)$ is nothing but   the sign of a weighted average of sectional curvatures of the submanifold $\Xi(\xi_0)$ (see \eqref{orbit}).  Indeed a simple calculation shows that if $a$ is given as in \eqref{a} then
$${\partial_\nu a(\xi) \over a(\xi)}= {M_1-1\over x_1}\nu_1+ \dots+{M_m-1\over x_m}  \nu_m,$$
where $\nu_1,\dots,\nu_m$ are the first $m-$components of the unit inner normal $\nu$ at the boundary point $\xi.$
A similar   weighted average of sectional curvatures was introduced in (1.13) of   \cite{dmm}.\end{remark}

According to the previous discussion, Theorem \ref{main1} is an immediate consequence of the following result.

\begin{theorem}
\label{main} Let $n\ge5.$
 Assume $\xi_0$ is a  $C^1$-stable critical point of $a$ constrained on the boundary.
\begin{itemize}
\item[$(i)$] 
If $\mathcal H_a(\xi_0)<0$  there exists $\epsilon _{0}>0$ such that, for each $\epsilon
\in (0,\epsilon _{0}),$ problem \emph{(\ref{p1})} has a
solution $u_{\epsilon }$  which blows-up at $\xi _0$ as $\epsilon\to0.$
  \item[$(ii)$] 
If $\mathcal H_a(\xi_0)>0$ there exists $\epsilon _{0}>0$ such that, for each $\epsilon
\in ( -\epsilon _{0},0),$ problem \emph{(\ref{p1})} has a
solution $u_{\epsilon }$which blows-up at $\xi _0$ as $\epsilon\to0.$
 \end{itemize}\end{theorem}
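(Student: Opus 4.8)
The plan is to use a finite-dimensional Lyapunov--Schmidt reduction. Writing $p:=\frac{2n}{n-2}$ for the critical Sobolev exponent, I work in $H^1(\Omega)$ equipped with the weighted inner product $\langle u,w\rangle=\int_\Omega a\,(\nabla u\cdot\nabla w+uw)$ induced by the operator in \eqref{p1}, and rephrase the problem as the fixed-point equation $u=i^*_a\!\left(u_+^{q-1}\right)$, where $i^*_a$ is the adjoint of the weighted embedding. The building block is the standard Aubin--Talenti bubble
$$U_{\delta,\xi}(x)=\alpha_n\,\frac{\delta^{(n-2)/2}}{\left(\delta^2+|x-\xi|^2\right)^{(n-2)/2}},\qquad -\Delta U_{\delta,\xi}=U_{\delta,\xi}^{p-1}\ \text{in }\mathbb{R}^n,$$
centered at a boundary point $\xi\in\partial\Omega$ near $\xi_0$ and scaled by $\delta>0$ small. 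Since $U_{\delta,\xi}$ neither respects the Neumann condition nor feels the weight $a$, I define the ansatz $W_{\delta,\xi}=PU_{\delta,\xi}$ as the projection solving $-\operatorname{div}(a\nabla PU)+a\,PU=a\,U_{\delta,\xi}^{p-1}$ with $\partial_\nu PU=0$, and I estimate the correction $PU_{\delta,\xi}-U_{\delta,\xi}$ in terms of the mean curvature $H(\xi)$ and of $\partial_\nu a(\xi)/a(\xi)$ by working in Fermi coordinates along the boundary. The hypothesis $n\ge5$ enters precisely here: it is the range in which $\int_{\mathbb{R}^n}U_{\delta,\xi}^2<\infty$ with the scaling $\int_\Omega a\,U_{\delta,\xi}^2\sim a(\xi)\delta^2$, so the linear term $+au$ in \eqref{p1} contributes only at order $\delta^2$, strictly below the boundary terms.

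Next I carry out the reduction. Let $Z_{\delta,\xi}^0=\partial_\delta W_{\delta,\xi}$ and $Z_{\delta,\xi}^j=\partial_{\xi_j}W_{\delta,\xi}$, $j=1,\dots,n-1$, span the approximate kernel tangent to the manifold of boundary bubbles. Splitting $u=W_{\delta,\xi}+\phi$ with $\phi$ in the $\langle\cdot,\cdot\rangle$-orthogonal complement of $\operatorname{span}\{Z^0,\dots,Z^{n-1}\}$, I first establish the linear theory: the linearized operator at $W_{\delta,\xi}$, projected onto that complement, is invertible with norm bounded uniformly in $(\delta,\xi)$, which follows from the nondegeneracy of the bubble for the critical equation in $\mathbb{R}^n$ together with the coercivity coming from the weight. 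A contraction-mapping argument then produces, for every $(\delta,\xi)$ with $\delta\in(c|\epsilon|,C|\epsilon|)$ and $\xi$ near $\xi_0$, a unique remainder $\phi=\phi_{\delta,\xi}$, depending $C^1$ on the parameters, small enough not to affect the leading terms of the reduced energy.

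The core of the argument is the expansion of the reduced energy $\mathcal{J}_\epsilon(\delta,\xi):=J_\epsilon(W_{\delta,\xi}+\phi_{\delta,\xi})$, where $J_\epsilon$ is the energy of \eqref{p1}. Collecting the contributions of the Sobolev energy of the bubble, of the interaction with the boundary, of the weight $a$, and of the exponent shift $q=p+\epsilon$, I expect
$$\mathcal{J}_\epsilon(\delta,\xi)=a(\xi)\Big(c_0+c_1\,\mathcal{H}_a(\xi)\,\delta+c_2\,\epsilon\log\delta+o(\delta)+o(\epsilon\log\delta)\Big),$$
with $c_1,c_2>0$, where the order-$\delta$ coefficient is exactly the quantity $\mathcal{H}_a(\xi)$ of \eqref{ha}, combining the mean-curvature contribution $-H(\xi)$ of the Neumann projection with the weighted normal-derivative term $\frac{2}{n-1}\,\partial_\nu a/a$, and where the exponent shift produces the $\epsilon\log\delta$ term through $\int_\Omega a\,U_{\delta,\xi}^p\log U_{\delta,\xi}$. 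Verifying this expansion, with the exact coefficients and the claimed orders, is the hard part: one must control the boundary layer in Fermi coordinates sharply enough to isolate the single power of $\delta$ carrying $\mathcal{H}_a$ and to confirm that all remaining terms, in particular the $\delta^2$ coming from $+au$, are genuinely negligible.

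Finally I locate a critical point. The $\delta$-derivative of the bracket vanishes when $c_1\mathcal{H}_a(\xi)+c_2\epsilon/\delta=0$, i.e.\ at $\delta=-\,c_2\epsilon/(c_1\mathcal{H}_a(\xi))$, which is positive precisely when $\epsilon$ and $\mathcal{H}_a(\xi)$ have opposite signs: this gives case $(i)$ for $\epsilon>0$, $\mathcal{H}_a(\xi_0)<0$, and case $(ii)$ for $\epsilon<0$, $\mathcal{H}_a(\xi_0)>0$; moreover this critical value of $\delta$, of order $|\epsilon|$, is nondegenerate since $g''(\delta)=-c_2\epsilon/\delta^2\neq0$. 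Since the remaining dependence of $\mathcal{J}_\epsilon$ on $\xi$ is, to leading order and up to a positive factor, through $a(\xi)$, and $\xi_0$ is a $C^1$-stable critical point of $a$ constrained to $\partial\Omega$, this stability persists under the $C^1$-small perturbation and yields a critical point $(\delta_\epsilon,\xi_\epsilon)$ of $\mathcal{J}_\epsilon$ with $\xi_\epsilon\to\xi_0$. Standard Lyapunov--Schmidt theory then guarantees that $u_\epsilon=W_{\delta_\epsilon,\xi_\epsilon}+\phi_{\delta_\epsilon,\xi_\epsilon}$ is a genuine solution of \eqref{p1}, positive by the maximum principle, blowing up at $\xi_0$ as $\epsilon\to0$, which proves the theorem.
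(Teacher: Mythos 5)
Your overall route is exactly the paper's own: a Ljapunov--Schmidt reduction around a single boundary bubble at scale $\delta=d|\epsilon|$, a reduced energy of the form $a(\xi)\bigl(c_0+c_4\,\mathcal H_a(\xi)\,\delta+c_5\,\epsilon\log\delta+\dots\bigr)$, and the balancing $\delta\approx -c_5\epsilon/\bigl(c_4\mathcal H_a(\xi_0)\bigr)$, which reproduces the sign dichotomy of cases $(i)$ and $(ii)$ together with the $C^1$-stability argument in $\xi$. Your use of the weighted projection $-\mathrm{div}(a\nabla PU)+a\,PU=a\,U^{p}$ is a harmless variant of the paper's choice: the paper keeps the unweighted Rey--Wei projection $-\Delta PU_{\delta,\xi}+PU_{\delta,\xi}=U_{\delta,\xi}^{p}$ and absorbs the weight into the drift term $\frac{\nabla a}{a}\nabla PU_{\delta,\xi}$ in the error estimate and into the term $I_2$ of the energy expansion, which is precisely what allows it to quote the expansions of \cite{rw2} verbatim.

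There is, however, one genuine gap: your functional setting breaks down exactly in case $(i)$. For $\epsilon>0$ the exponent $q-1=p+\epsilon$, $p=\frac{n+2}{n-2}$, is supercritical, so for a general $u\in H^1(\Omega)$ the quantity $\int_\Omega a\,(u^+)^{p+1+\epsilon}dx$ may be infinite: the energy $J_\epsilon$ is not defined (let alone $C^1$) on $H^1(\Omega)$, $f_\epsilon(u)$ need not belong to $L^{\frac{2n}{n+2}}(\Omega)$, and hence the fixed-point equation $u=i^*(f_\epsilon(u))$, the linearized operator, and your contraction-mapping step cannot be run ``in $H^1$ with the weighted inner product'' as you state. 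This is why the paper introduces the space $H_\epsilon=H^1(\Omega)\cap L^{s_\epsilon}(\Omega)$ with $s_\epsilon=\frac{2n}{n-2}+\epsilon\frac{n}{2}$ together with the regularity estimate \eqref{i2} for $i^*$ (borrowed from \cite{mp}), and carries out the whole reduction --- invertibility of $L_{d,\xi}$, the error bound $O(|\epsilon\ln|\epsilon||)$ of Lemma \ref{re1}, the contraction --- in the norm $\|u\|+|u|_{s_\epsilon}$; your sketch for $\epsilon<0$ is fine as written, but for $\epsilon>0$ it needs this (standard but necessary) modification. A smaller remark: the fact that the order-$\delta$ coefficient assembles into exactly $\mathcal H_a(\xi)$ of \eqref{ha}, i.e.\ that the ratio of the weight coefficient to the curvature coefficient is precisely $\frac{2}{n-1}$, is the paper's key computation (the evaluation of $c_6$ and $c_7$ via Gamma-function integrals culminating in \eqref{j5}); you correctly anticipate the outcome but defer the verification, which in the paper constitutes the heart of the proof of Proposition \ref{pro4}.
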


It is useful to give a simple example.

\begin{example}
Let $\Omega$ be a strict convex domain in  $\mathbb R^{N-1}$, so that $H(\xi)>0$ for any $\xi\in\partial\Omega .$  
Let $D$ the torus-like domain 
\begin{equation*}
\mathcal{D}:=\{(y_{1},  x^{\prime })\in \mathbb{R}^{2}\times
 \mathbb{R}^{N-2}:\left( \left\vert
y_{1}\right\vert , x^{\prime }\right)
\in \Omega\}.
\end{equation*} In particular $h=1,$   $n=N-1 $ ($m=1$ and $M=2$) and  $a(x)=x_1$. In this case the exponent $q$ approaches $q^*_1={2(N-1)\over N-3}.$ It is clear that $a$ constrained on $\partial\Omega$ has two  $C^1$-stable critical points: a strict minimum point $\xi_{\mathfrak m} $
and a strict maximum point $\xi_{\mathfrak M} $, i.e.
$$ a(\xi_{\mathfrak m} )  :=\min\limits_{\xi\in\partial\Omega} \xi_1\quad \hbox{and}\quad  a(\xi_{\mathfrak M} )  :=\max\limits_{\xi\in\partial\Omega} \xi_1.$$
It is immediate to check that
 $$\mathcal H_a(\xi_{\mathfrak m}) =c_N\({2\over N-2}{1\over a(\xi_{\mathfrak m} )}-H(\xi_1)\)\ \hbox{and}\ \mathcal H_a( \xi_{\mathfrak M} ) =c_N\(-{2\over N-2} {1\over a(\xi_{\mathfrak M} )}-H(\xi_2)\).$$
 In particular, $\mathcal H_a(\xi_{\mathfrak M})<0.$ On the other hand, $\mathcal H_a(\xi_{\mathfrak m})<0$ if $a(\xi_{\mathfrak m} )$ is large enough while $\mathcal H_a( \xi_{\mathfrak m} )>0$ if $a(\xi_{\mathfrak m} )$ is small enough.
 \\
 It is interesting to note that in this case problem \eqref{P} has  always a solution when the exponent $q$ approaches $2^*_{ 1}$ from above, which concentrate along the geodesic $\Xi(\xi_{\mathfrak M})$. On the other hand, if the exponent $q$ approaches $2^*_{ 1}$ from below existence of solutions to problem \eqref{P} depends on the size of the {\em hole} of $\mathcal D$, i.e. if  $a(\xi_{\mathfrak m} )$ is small enough   then there exists a solution which concentrate along the geodesic $\Xi(\xi_{\mathfrak m})$.
\end{example}

\medskip
The proof of our results relies on a very well known Ljapunov-Schmidt reduction.
We omit many details of the finite dimensional reduction, because they   can be found, up to some minor modifications, in the literature.  
 In Section \ref{uno}  we sketch  the main steps of the proof  and we prove Theorem \ref{main}. The proofs which can not be immediately deduced from known results are given in the Appendix.

\section{Scheme of the Proof}\label{uno}

\subsection{Setting of the problem}
We introduce the Hilbert space $\mathrm{H} ^{1}(\Omega )$ equipped with the inner product and the corresponding norm
$$(u,v):=\int_{\Omega }a(x)\nabla u  \nabla v\text{ }dx\ \hbox{and}\
\Vert u\Vert :=\left( \int_{\Omega }a(x)\left\vert \nabla u\right\vert
^{2}dx\right) ^{1/2}.
$$
 We also introduce the Banach space  $\mathrm{L}^{r}(\Omega ),$ $r\in
\lbrack 1,\infty )$, equipped with the norm
\begin{equation*}
|u| _{r}:=\left( \int_{\Omega }a(x)\left\vert u\right\vert
^{r}dx\right) ^{1/r}.
\end{equation*}%

  Let $i^{\ast }:%
\mathrm{L}^{\frac{2n}{n+2}}(\Omega )\rightarrow \mathrm{H} ^{1}(\Omega )$
be the adjoint operator of the embedding $i:\mathrm{H} ^{1}(\Omega
)\hookrightarrow \mathrm{L}^{\frac{2n}{n-2}}(\Omega ),$ i.e. $i^{\ast }(f)=u$
if and only if%
\begin{equation*}
-\text{ div}(a(x)\nabla u)+a(x) u=a(x)f\quad \text{in}\ \Omega ,\qquad \partial _\nu u=0\quad
\text{on}\ \partial \Omega .
\end{equation*}
It is clear that there exists a positive constant $c$ such that
\begin{equation}\label{i1}
\left\Vert i^{\ast }(f)\right\Vert \leq c|f| _{\frac{2n}{%
n+2}}\quad \forall \ f\in \mathrm{L}^{\frac{2n}{n+2}}(\Omega ).
\end{equation}%

To study problem \eqref{p1} in the supercritical case, it is useful to recall this regularity result (see for example {\cite{mp})}. 
inequality
\begin{equation}\label{i2}
\left|i ^{*}(f)\right|_{s}\le C\left|f\right|_{\frac{ns}{n+2s}}\text{ for }f\in L^{\frac{ns}{n+2s}}
\end{equation}
 where $s\ge\frac{2n}{n-2} $ so that $\frac{ns}{n+2s}\ge\frac{2n}{n+2}$.

Next, we consider the Banach space ${H} _\epsilon=H ^{1}(\Omega)\cap L ^{s_{\epsilon}}(\Omega)$
with the norm
$$
\|u\|_{H_\epsilon}=\|u\| +\left|u\right|_{s_{\epsilon}}
$$
where we set
$ s_{\epsilon}= {2n\over n-2}+\epsilon \frac{n}{2}
$ if $\epsilon>0$ and $s_\epsilon={2n\over n-2}$ if $\epsilon\le 0.$
We remark that if $\epsilon\le 0$ the space
$ {H}_{\epsilon}$ is nothing but the space $H ^{1}(\Omega)$ with the norm
$\|\cdot\| $.
Finally,  also using  the maximum principle, problem (\ref{p1}) is equivalent to the problem
\begin{equation}
u=i ^{*}( f_{\epsilon}(u))\ \ \ u\in {H}_{\epsilon}\label{eq1}
\end{equation}
where $f_{\epsilon}(u)=(u^{+})^{p+\epsilon},$  
$u^{+}=\max\{u,0\}$ and $p:=\frac{n+2}{n-2}.$

\subsection{The approximated solution}

Let us introduce the {\em standard bubbles}
\begin{equation*}
U_{\delta ,\xi }:=\alpha_n {\frac{\delta ^{\frac{%
n-2}{2}}}{(\delta ^{2}+|x-\xi |^{2})^{\frac{n-2}{2}}},}\qquad \delta >0,%
\text{\quad }\xi \in \mathbb{R}^{n},
\end{equation*}
with $\alpha _{n}:=\left[ n(n-2)\right] ^{\frac{n-2}{4}},$
which are the positive solutions to the limit problem
\begin{equation*}
-\Delta u=u^p ,\qquad u\in H^{1}(\mathbb{R}^{n}).
\end{equation*}%
It is useful to recall that the set of solutions to the linearized problem
\begin{equation*}
-\Delta Z =pU_{\delta ,\xi }^{p-1}Z \ \hbox{in}\ \mathbb{R}^{n}.
\end{equation*}%
is spanned by the functions
\begin{equation*}
 Z _{\delta ,\xi }^{0}(x):={\frac{\partial U_{\delta ,\xi }}{\partial
\delta }}=\alpha _{n}{\frac{n-2}{2}}\delta ^{\frac{n-4}{2}}{\frac{|x-\xi
|^{2}-\delta ^{2}}{(\delta ^{2}+|x-\xi |^{2})^{n/2}}}
\end{equation*}%
and, for each $j=1,\dots ,n,$
\begin{equation*}
Z _{\delta ,\xi }^{j}(x):={\frac{\partial U_{\delta ,\xi }}{\partial \xi
_{j}}}=\alpha _{n}(n-2)\delta ^{\frac{n-2}{2}}{\frac{x_{j}-\xi _{j}}{(\delta
^{2}+|x-\xi |^{2})^{n/2}}}.
\end{equation*}%

Let $PU_{\delta ,\xi }$ denote the solution of the problem
onto $\mathrm{H} ^{1}(\Omega )$, i.e.
\begin{equation*}
-\Delta PU_{\delta ,\xi }+PU_{\delta ,\xi }=U_{\delta ,\xi }^p\ \text{\ in}\ \Omega ,\qquad \partial_\nu PU_{\delta ,\xi }=0\ \text{\ on}\ \partial
\Omega .
\end{equation*}

 We look for a solution to problem (\ref{eq1}) as
$$
u_{\epsilon }= PU_{\delta  ,\xi  } +\phi_\epsilon ,
$$
where the concentration point   and the concentration parameter satisfy
$$\xi \in\partial\Omega\quad\hbox{and}\quad
 \delta  =|\epsilon|  d\quad \hbox{for some}\
d >0.  $$

The rest term $\phi_\epsilon$ belongs to a suitable space defined as follows.
Let us introduce the spaces
$$
K_{d,\xi} :=\mathrm{span}\{Z _{\delta ,\xi }^j\ :\ j=0,1,\dots ,n\}$$ and
$$
K_{d,\xi} ^{\perp }:=\left\{ \phi \in \mathrm{H}%
 ^{1}(\Omega ):(\phi ,Z _{\delta ,\xi }^{j})=0,\  j=0,1,\dots ,n\right\},$$
and  the  projection operators
\begin{equation*}
\Pi _{d,\xi}(u):= \sum%
\limits_{j=0}^{n}(u,Z _{\delta ,\xi }^j)Z _{\delta ,\xi }^j\quad \hbox{and}\quad \Pi _{d,\xi}^{\perp }(u):=u-\Pi _{d,\xi}(u).
\end{equation*}

As usual, our approach to solve problem \eqref{eq1} will be to find a $(d,\xi)\in \mathbb R\times \partial\Omega $ and a function $\phi \in K_{d,\xi}^{\perp }$ such that
\begin{equation}
\Pi _{d,\xi}^{\perp }\left\{ PU_{\delta,\xi}+\phi -i^{\ast }\left[ f_{\epsilon }\(PU_{\delta,\xi}+\phi\)\right] \right\} =0  \label{es1}
\end{equation}%
and
\begin{equation}
\Pi _{d,\xi} \left\{ PU_{\delta,\xi}+\phi -i^{\ast }\left[ f_{\epsilon }\(PU_{\delta,\xi}+\phi\)\right] \right\} =0.
  \label{es2}
\end{equation}

\subsection{Reduction to a finite dimensional problem: solving equation \eqref{es1}}
First, we   find for any $(d,\xi)\in \mathbb R\times \partial\Omega 
$ and small $\epsilon $ a function $\phi \in K_{d,\xi}^{\perp }$ such that \eqref{es1} holds. To this aim we define a
linear operator $L_{d,\xi}:K_{d,\xi}^{\perp }\rightarrow K_{d,\xi
}^{\perp }$ by
\begin{equation}\label{elle}
L_{d,\xi}\phi :=\phi -\Pi _{d,\xi}^{\perp }i^{\ast }\left[ f^{\prime }_\epsilon(PU_{\delta,\xi})\phi \right] .
\end{equation}%
We will prove the following.

\begin{proposition}
\label{pro1} For any compact subset $C $ of $\mathbb R\times \partial\Omega  $ there exist $%
\epsilon _{0}>0$ and $c>0$ such that for each $\epsilon \in (-\epsilon_0,\epsilon
_{0}) $ and $(d,\xi)\in C $ the operator $%
L_{d,\xi}$ is invertible and
\begin{equation*}
\left\Vert L_{d,\xi}\phi \right\Vert _{H_\epsilon} \geq
c\left\Vert \phi \right\Vert _{H_\epsilon}\ \quad \ \forall \ \phi \in K_{d,\xi}^{\perp }.
\end{equation*}
\end{proposition}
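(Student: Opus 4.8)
The plan is to argue by contradiction, following the classical scheme for the invertibility of the linearized operator in a Lyapunov--Schmidt reduction. Suppose the asserted estimate fails. Then there are sequences $\epsilon_k\to0$, points $(d_k,\xi_k)\in C$, and functions $\phi_k\in K_{d_k,\xi_k}^{\perp}$ with
$$
\|\phi_k\|_{H_{\epsilon_k}}=1
\qquad\text{and}\qquad
\|L_{d_k,\xi_k}\phi_k\|_{H_{\epsilon_k}}\to0 .
$$
Setting $h_k:=L_{d_k,\xi_k}\phi_k$ and recalling \eqref{elle}, this reads
$$
\phi_k=\Pi^{\perp}_{d_k,\xi_k}\,i^{\ast}\!\left[f'_{\epsilon_k}(PU_{\delta_k,\xi_k})\,\phi_k\right]+h_k,
\qquad \|h_k\|_{H_{\epsilon_k}}\to0 .
$$
Since $C$ is compact we may assume $(d_k,\xi_k)\to(d_0,\xi_0)\in C$ with $\xi_0\in\partial\Omega$, while $\delta_k=|\epsilon_k|d_k\to0$.

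First I would control the right-hand side in the relevant norms. The nonlinearity $f'_{\epsilon}(PU)=(p+\epsilon)(PU)^{p+\epsilon-1}$ is concentrated in a shrinking ball around $\xi_k$. Using \eqref{i1} in the subcritical/critical regime, and the weighted regularity estimate \eqref{i2} when $\epsilon_k>0$, the map $\phi\mapsto i^{\ast}[f'_{\epsilon_k}(PU)\phi]$ is bounded on $H_{\epsilon_k}$, and a H\"older estimate shows that the contribution coming from outside a fixed small ball about $\xi_k$ is $o(1)\|\phi_k\|_{H_{\epsilon_k}}$. Hence the essential mass of $\phi_k$ lives near the concentration point.

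The heart of the matter is a blow-up analysis. I would straighten $\partial\Omega$ near $\xi_k$ and rescale $x=\xi_k+\delta_k y$, setting
$$
\tilde\phi_k(y):=\delta_k^{\frac{n-2}{2}}\,\phi_k(\xi_k+\delta_k y).
$$
Under this change of variables the weight tends to the positive constant $a(\xi_0)$, the rescaled domains exhaust the half-space $\rr^n_+$ carrying the Neumann condition, and $PU_{\delta_k,\xi_k}$ rescales to the standard bubble $U_{1,0}$, whose centre sits on the boundary hyperplane so that $\partial_\nu U_{1,0}=0$ there. Passing to a subsequence, $\tilde\phi_k\rightharpoonup\phi_\infty$ weakly; extending $\phi_\infty$ by even reflection across the hyperplane (which is exactly the Neumann condition), the limit solves $-\Delta\phi_\infty=pU_{1,0}^{p-1}\phi_\infty$ on $\rr^n$, and the orthogonality inherited from $\phi_k\in K_{d_k,\xi_k}^{\perp}$ passes to the limit (the lower-order term in the inner product rescales away). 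By the nondegeneracy of the bubble its bounded kernel is exactly $\mathrm{span}\{Z^0_{1,0},\dots,Z^n_{1,0}\}$; the Neumann condition forces $\phi_\infty$ to be even in the normal variable, hence to lie in the span of the even generators $Z^0_{1,0},\dots,Z^{n-1}_{1,0}$, and the surviving orthogonality relations make all of these coefficients vanish, so $\phi_\infty=0$.

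Finally I would upgrade the weak convergence to strong in order to reach the contradiction. Testing the equation for $\phi_k$ against $\phi_k$ itself, the quadratic term $\int_\Omega a\,f'_{\epsilon_k}(PU_{\delta_k,\xi_k})\,\phi_k^2$ tends to $0$, because its mass concentrates in the region where $\tilde\phi_k\to0$; together with $\|h_k\|_{H_{\epsilon_k}}\to0$ this forces the Dirichlet part $\|\phi_k\|\to0$. In the supercritical case $\epsilon_k>0$, the remaining piece $|\phi_k|_{s_{\epsilon_k}}$ of the norm is then recovered directly from \eqref{i2}, which bounds it by the already vanishing dual quantity. Either way $\|\phi_k\|_{H_{\epsilon_k}}\to0$, contradicting the normalization. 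This yields the uniform lower bound; invertibility follows because $L_{d,\xi}=\mathrm{Id}-\Pi^{\perp}_{d,\xi}\,i^{\ast}[f'_\epsilon(PU)\,\cdot\,]$ is a compact perturbation of the identity, hence Fredholm of index zero, so injectivity gives surjectivity. I expect the supercritical regime to be the main obstacle: there the two-norm structure of $H_\epsilon$ and the exponent $p+\epsilon>p$ prevent the use of the ordinary Sobolev embedding, and every estimate above must instead be carried out through the weighted bound \eqref{i2}.
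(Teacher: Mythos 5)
Your proposal is correct and follows essentially the same route as the paper, which gives no details and simply argues ``as in Lemma 3.1 of \cite{mp}'': that cited proof is precisely this contradiction/blow-up scheme (normalized sequence, rescaling to a limit problem, nondegeneracy of the bubble plus the inherited orthogonality to force the weak limit to vanish, then strong convergence to contradict the normalization, with invertibility via Fredholm theory), and your adaptations to the present setting --- boundary straightening, the half-space Neumann limit problem handled by even reflection, and the recovery of the $|\cdot|_{s_\epsilon}$ part of the norm through the regularity estimate \eqref{i2} in the supercritical case --- are exactly the modifications the Neumann and two-norm framework requires.
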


\begin{proof}
We argue as in Lemma 3.1 of \cite{mp}.
\end{proof}

Now, we are in position to solve equation \eqref{es1}.

\begin{proposition}
\label{pro2} For any compact subset $C $ of $\mathbb R\times \partial\Omega $ there exist $%
\epsilon _{0}>0$ and $c>0$ such that for each $\epsilon \in (-\epsilon_0,\epsilon
_{0}) $ and $(d,\xi)\in C $ there exists
a unique $\phi _{d,\xi}^{\epsilon }\in K_{d,\xi}^{\perp }$ which solves \eqref{es1} and satisfies
$$\left\Vert \phi _{d,\xi}^{\epsilon }\right\Vert _{H_\epsilon}
\leq c|\epsilon||\ln\epsilon|. $$
Moreover, the map $(d,\xi)\to \phi _{d,\xi}^{\epsilon }$ is a $C^1-$map which satisfies
$$
\left\Vert D_{(d,\xi)}\phi _{d,\xi}^{\epsilon }\right\Vert _{H_\epsilon}
\leq c|\epsilon||\ln\epsilon|.  
$$
\end{proposition}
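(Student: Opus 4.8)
The plan is to solve \eqref{es1} by a contraction-mapping argument built directly on the invertibility of $L_{d,\xi}$ established in Proposition \ref{pro1}. First I would rewrite \eqref{es1} as a fixed-point equation for $\phi\in K_{d,\xi}^\perp$. Expanding $f_\epsilon(PU_{\delta,\xi}+\phi)=f_\epsilon(PU_{\delta,\xi})+f'_\epsilon(PU_{\delta,\xi})\phi+N_\epsilon(\phi)$, where $N_\epsilon(\phi)$ collects the genuinely nonlinear remainder, equation \eqref{es1} becomes
\begin{equation*}
L_{d,\xi}\phi=\Pi_{d,\xi}^\perp i^*\left[f_\epsilon(PU_{\delta,\xi})-U_{\delta,\xi}^p\right]+\Pi_{d,\xi}^\perp i^*\left[N_\epsilon(\phi)\right]=:\mathcal{T}(\phi),
\end{equation*}
using $PU_{\delta,\xi}=i^*(U_{\delta,\xi}^p)$ up to the projection. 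Since $L_{d,\xi}$ is invertible with norm bounded below by $c$, solving \eqref{es1} is equivalent to the fixed-point problem $\phi=L_{d,\xi}^{-1}\mathcal{T}(\phi)=:\mathcal{S}(\phi)$, and I would seek a fixed point in the ball $\{\|\phi\|_{H_\epsilon}\le C|\epsilon||\ln\epsilon|\}$.

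The heart of the proof is two estimates. The first is the bound on the \emph{error term} $\mathcal{E}:=\Pi_{d,\xi}^\perp i^*[f_\epsilon(PU_{\delta,\xi})-U_{\delta,\xi}^p]$, which measures how far the approximate solution is from solving \eqref{eq1}. Using the regularity inequalities \eqref{i1} and \eqref{i2}, this reduces to estimating $|f_\epsilon(PU_{\delta,\xi})-U_{\delta,\xi}^p|$ in the appropriate dual norm. I would split the difference into the part coming from the gap between $PU_{\delta,\xi}$ and $U_{\delta,\xi}$ (a projection error controlled by the regular part of the Neumann function and by the boundary geometry) and the part coming from the exponent shift, namely $U_{\delta,\xi}^{p+\epsilon}-U_{\delta,\xi}^p\sim\epsilon U_{\delta,\xi}^p\ln U_{\delta,\xi}$. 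The logarithmic factor, integrated against the bubble with $\delta=|\epsilon|d$, is precisely what produces the $|\epsilon||\ln\epsilon|$ rate. The second estimate is a Lipschitz bound on the nonlinear map: $\|i^*[N_\epsilon(\phi_1)-N_\epsilon(\phi_2)]\|_{H_\epsilon}\le o(1)\|\phi_1-\phi_2\|_{H_\epsilon}$ on the ball, which for the supercritical case requires handling the exponent $p+\epsilon$ with $\epsilon>0$ and is exactly where the refined estimate \eqref{i2} and the second component of the $H_\epsilon$ norm enter. Granting these, $\mathcal{S}$ maps the ball into itself and is a contraction, so Banach's fixed-point theorem yields the unique $\phi_{d,\xi}^\epsilon$ with the stated norm bound.

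For the $C^1$-dependence and the derivative estimate, I would apply the implicit function theorem to the map $G(\phi,d,\xi):=\Pi_{d,\xi}^\perp\{PU_{\delta,\xi}+\phi-i^*[f_\epsilon(PU_{\delta,\xi}+\phi)]\}$, whose partial derivative in $\phi$ at the solution is a small perturbation of $L_{d,\xi}$ and hence invertible with uniformly bounded inverse by Proposition \ref{pro1}. Differentiating $G=0$ in $(d,\xi)$ and solving for $D_{(d,\xi)}\phi_{d,\xi}^\epsilon$ gives
\begin{equation*}
D_{(d,\xi)}\phi_{d,\xi}^\epsilon=-\left(\partial_\phi G\right)^{-1}D_{(d,\xi)}G,
\end{equation*}
so the bound $\|D_{(d,\xi)}\phi_{d,\xi}^\epsilon\|_{H_\epsilon}\le c|\epsilon||\ln\epsilon|$ follows once I show $\|D_{(d,\xi)}G\|_{H_\epsilon}\le c|\epsilon||\ln\epsilon|$. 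This last estimate is obtained by differentiating the error and nonlinear terms and re-running the same computations as for $\mathcal{E}$, noting that $\partial_d$ and $\partial_\xi$ acting on $PU_{\delta,\xi}$ produce the linearized bubbles $Z^j_{\delta,\xi}$ (up to scaling by $\delta=|\epsilon|d$), which preserves the orders.

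The main obstacle I anticipate is the careful bookkeeping in the supercritical regime $\epsilon>0$, where the naive $H^1$-framework is insufficient and one must work in $H_\epsilon$ with the auxiliary $L^{s_\epsilon}$-norm. Establishing the error and Lipschitz estimates uniformly in this twisted norm — in particular controlling $U_{\delta,\xi}^{p+\epsilon}\ln U_{\delta,\xi}$ and the nonlinear remainder through \eqref{i2} without losing the $|\epsilon||\ln\epsilon|$ rate — is delicate, and this is exactly the computation the authors defer to the Appendix. Everything else is the standard Lyapunov--Schmidt machinery, so I would cite the analogous arguments in \cite{mp} for the routine parts and concentrate the written detail on these weighted-norm estimates.
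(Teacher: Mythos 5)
Your overall strategy coincides with the paper's: rewrite \eqref{es1} as $L_{d,\xi}\phi=N_{d,\xi}(\phi)+R_{d,\xi}$, prove that the error term has size $O(|\epsilon||\ln\epsilon|)$ in the $H_\epsilon$-norm, run a contraction mapping on a ball of that radius using the invertibility from Proposition \ref{pro1}, and obtain the $C^1$-dependence by the implicit function theorem (the paper delegates exactly this routine part to Proposition 2.1 of \cite{mp}). However, there is one concrete error in your setup: the identity $PU_{\delta,\xi}=i^{\ast}(U_{\delta,\xi}^p)$, which you invoke to write the error term, is false in this paper. The operator $i^{\ast}$ inverts the \emph{anisotropic} operator $-\mathrm{div}(a\nabla\cdot)+a\,\cdot$ with Neumann conditions, whereas $PU_{\delta,\xi}$ is defined through the plain operator $-\Delta+1$. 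The correct relation, derived at the start of the paper's proof of Lemma \ref{re1}, is
$$PU_{\delta,\xi}=i^{\ast}\Bigl[f_0(U_{\delta,\xi})-\frac{\nabla a(x)}{a(x)}\nabla PU_{\delta,\xi}\Bigr],$$
so the error term \eqref{re} contains, besides the two pieces you identify (the exponent shift $f_\epsilon(PU_{\delta,\xi})-f_0(PU_{\delta,\xi})$, estimated via Proposition 2 of \cite{r1} as $O(|\epsilon\ln\epsilon|)$, and the projection error $f_0(PU_{\delta,\xi})-f_0(U_{\delta,\xi})$, estimated via Lemma \ref{yyl} together with \eqref{espu1}--\eqref{espu2} as $O(\delta)$), a third drift term $\bigl|\frac{\nabla a}{a}\nabla PU_{\delta,\xi}\bigr|_{2n/(n+2)}$ — the paper's $I_3$ — which is also $O(\delta)=O(|\epsilon|d)$. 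As written, your fixed-point equation is not equivalent to \eqref{es1}.

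The omission is repairable: the missing term fits within the $|\epsilon||\ln\epsilon|$ budget, so adding it to your $\mathcal{T}$ and re-running the contraction leaves the conclusion intact. But it is not a cosmetic slip in this particular paper, because the drift $\frac{\nabla a}{a}\nabla PU_{\delta,\xi}$ is precisely the term whose leading contribution $\partial_\nu a(\xi)$ later enters the reduced energy (the $c_7\, d\,|\epsilon|\,\partial_\nu a(\xi)$ term in the proof of Proposition \ref{pro4}) and generates the quantity $\mathcal H_a$ on which the whole theorem turns; treating $i^{\ast}$ as if it inverted $-\Delta+1$ erases the mechanism of the result. Two smaller points the paper makes explicit and you should too: since $\frac{ns_\epsilon}{n+2s_\epsilon}=\frac{2n}{n+2}+O(\epsilon)$, it suffices to compute the $L^{2n/(n+2)}$-norms, which is how the auxiliary $L^{s_\epsilon}$-component of the $H_\epsilon$-norm is dispatched; and the $O(\delta)$ bounds for $I_2$ and $I_3$ use $n\ge5$, which is where the dimensional hypothesis of Theorem \ref{main} enters the reduction — your write-up never records this restriction.
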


\begin{proof}
The proof is postponed to Appendix. 
\end{proof}

\subsection{The reduced problem: solving equation \eqref{es2}}
We now introduce the energy functional $J_{\epsilon }:\mathrm{H}\rightarrow \mathbb{R}$ defined by
\begin{equation*}
J_{\epsilon }(u):={\frac{1}{2}}\int\limits_{\Omega }a(x)\(|\nabla u|^{2}+u^2(x)\)dx-{%
\frac{1}{p+1+\epsilon }}\int\limits_{\Omega }a(x)(u^+)^{p+1+\epsilon }dx,
\end{equation*}%
whose critical points are the solutions to problem \eqref{eq1}. Let us define
the reduced energy functional $\widetilde{J}_{\epsilon }:\mathbb R\times\partial\Omega \rightarrow
\mathbb{R}$ by
\begin{equation*}
\widetilde{J}_{\epsilon }(d,\xi):=J_{\epsilon
}(PU_{\delta,\xi}+\phi _{d,\xi}^{\epsilon })
\end{equation*}%
Next, we prove that the critical points of $\widetilde{J}_{\epsilon }$ are
the solutions to problem \eqref{es2}.

\begin{proposition}
\label{pro3} The function $PU_{\delta,\xi}+\phi _{d,\xi}^{\epsilon }$ is a critical point of the
functional $J_{\epsilon }$ if and only if the point $(d,\xi)$ is a critical point of the function $\widetilde{J}_{\epsilon }.$
\end{proposition}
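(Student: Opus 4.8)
The plan is to deduce the equivalence from a chain-rule computation, exploiting that equation \eqref{es1} has already been solved. First I would set $u_{d,\xi}:=PU_{\delta,\xi}+\phi^\epsilon_{d,\xi}$ and $W_{d,\xi}:=u_{d,\xi}-i^*\big(f_\epsilon(u_{d,\xi})\big)$. Since $\phi^\epsilon_{d,\xi}$ solves \eqref{es1}, one has $\Pi^\perp_{d,\xi}W_{d,\xi}=0$, that is $W_{d,\xi}\in K_{d,\xi}$, so there are coefficients $c^\epsilon_j(d,\xi)$ with
\begin{equation*}
W_{d,\xi}=\sum_{j=0}^n c^\epsilon_j(d,\xi)\,Z^j_{\delta,\xi}.
\end{equation*}
Writing $J'_\epsilon(u)[v]=\<u-i^*(f_\epsilon(u)),v\>$, where $\<\cdot,\cdot\>$ is the $H^1$-scalar product for which $i^*$ is the adjoint of the embedding, I observe that $u_{d,\xi}$ solves \eqref{eq1} --- equivalently both \eqref{es1} and \eqref{es2} hold --- precisely when $W_{d,\xi}=0$, and by the linear independence of the $Z^j_{\delta,\xi}$ this is the case if and only if $c^\epsilon_j(d,\xi)=0$ for all $j$.

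Next I would differentiate the reduced functional. Since $(d,\xi)\mapsto\phi^\epsilon_{d,\xi}$ is $C^1$ by Proposition \ref{pro2}, the chain rule gives, for every admissible direction $\tau\in\{\partial_d\}\cup T_\xi(\partial\Omega)$,
\begin{equation*}
\partial_\tau\widetilde J_\epsilon(d,\xi)=J'_\epsilon(u_{d,\xi})\big[\partial_\tau u_{d,\xi}\big]=\<W_{d,\xi},\partial_\tau u_{d,\xi}\>=\sum_{j=0}^n c^\epsilon_j(d,\xi)\,\<Z^j_{\delta,\xi},\partial_\tau u_{d,\xi}\>.
\end{equation*}
One implication is then immediate: if $u_{d,\xi}$ is a critical point of $J_\epsilon$, all the $c^\epsilon_j$ vanish and the right-hand side is zero for every $\tau$, so $(d,\xi)$ is a critical point of $\widetilde J_\epsilon$.

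For the converse I would show that $\partial_\tau\widetilde J_\epsilon=0$ for all $\tau$ forces every $c^\epsilon_j$ to vanish. Splitting $\partial_\tau u_{d,\xi}=\partial_\tau PU_{\delta,\xi}+\partial_\tau\phi^\epsilon_{d,\xi}$, the leading term $\partial_\tau PU_{\delta,\xi}$ reproduces, after the rescaling $\delta=|\epsilon|d$, the functions $Z^j_{\delta,\xi}$ (the parameter $d$ yielding $Z^0_{\delta,\xi}$ and the tangential variations of $\xi$ yielding the tangential combinations of $Z^1_{\delta,\xi},\dots,Z^n_{\delta,\xi}$), while $\partial_\tau\phi^\epsilon_{d,\xi}$ is of lower order by Proposition \ref{pro2}. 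Consequently the coefficient matrix $\big(\<Z^j_{\delta,\xi},\partial_\tau u_{d,\xi}\>\big)$ equals, up to a normalising power of $\delta$ and a uniformly small perturbation, the Gram matrix of the $Z^j_{\delta,\xi}$ restricted to the $n$ admissible directions, which stays nondegenerate for $(d,\xi)$ in a compact set; the vanishing of all $\partial_\tau\widetilde J_\epsilon$ then yields $c^\epsilon_j(d,\xi)=0$, hence $W_{d,\xi}=0$ and \eqref{eq1}.

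The hard part will be the bookkeeping forced by the mismatch between the $n+1$ generators $Z^0_{\delta,\xi},\dots,Z^n_{\delta,\xi}$ and the $n$ parameters $(d,\xi)\in\mathbb R\times\partial\Omega$: moving $\xi$ tangentially along $\partial\Omega$ never reproduces the inner-normal translation $\sum_k\nu_k(\xi)Z^k_{\delta,\xi}$. I expect this to be resolved by the Neumann condition, which makes the component of $W_{d,\xi}$ along the normal mode vanish (at leading order it pairs functions of opposite parity across the boundary), so that the effective number of scalar equations encoded in \eqref{es2} matches the $n$ equations $\partial_\tau\widetilde J_\epsilon=0$. Establishing this normal-mode cancellation together with the asymptotics of $\<Z^j_{\delta,\xi},\partial_\tau u_{d,\xi}\>$ is the only step not reducible to the cited computations, and I would carry it out in the Appendix.
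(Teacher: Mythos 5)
Your overall scheme --- writing $W_{d,\xi}=\sum_{j}c^\epsilon_j(d,\xi)Z^j_{\delta,\xi}$ after solving \eqref{es1}, differentiating $\widetilde J_\epsilon$ through the $C^1$-dependence of $\phi^\epsilon_{d,\xi}$ granted by Proposition \ref{pro2}, and then inverting an asymptotically diagonal Gram-type matrix --- is precisely the standard argument behind the paper's one-line proof, which simply invokes Proposition 2.2 of \cite{mp}. The forward implication and the chain-rule identity are fine (modulo the remark that the scalar product must contain the zero-order term $\int_\Omega a\,uv\,dx$, so that indeed $J_\epsilon'(u)[v]=(u-i^*(f_\epsilon(u)),v)$; the displayed inner product in Section 2 omits it, evidently a misprint).

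The genuine gap is your treatment of the normal mode, which you yourself flag as ``the hard part''. Keeping $Z^n_{\delta,\xi}$ (the inner-normal translation) inside $K_{d,\xi}$ and arguing that its coefficient $c^\epsilon_n$ vanishes because ``at leading order it pairs functions of opposite parity across the boundary'' cannot work: for the \emph{if and only if} you need $c^\epsilon_n=0$ exactly, and in a curved domain there is no exact parity --- the boundary is the graph $x_n=\rho_\xi(x')$ with $\rho_\xi(x')=\frac12\sum_i k_ix_i^2+O(|x'|^3)$, so the parity argument only gives smallness of $c^\epsilon_n$, leaving $u_{d,\xi}$ a solution up to a nonzero remainder $c^\epsilon_n Z^n_{\delta,\xi}$ and destroying the converse implication. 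Worse, with an $(n+1)$-dimensional $K_{d,\xi}$ equation \eqref{es2} encodes $n+1$ scalar conditions while $\nabla\widetilde J_\epsilon=0$ provides only $n$, so the proposition would be false as stated. The correct resolution, implicit in the paper and explicit in \cite{rw2} (whose expansions the paper imports), is that for concentration at $\xi\in\partial\Omega$ the approximate kernel must be taken $n$-dimensional from the outset: in boundary-adapted coordinates one keeps $Z^0_{\delta,\xi}$ and only the $n-1$ tangential translations, discarding $Z^n_{\delta,\xi}$ altogether. This is legitimate because $Z^n_{\delta,\xi}$ is odd in $x_n$ and violates the Neumann condition of the linearized half-space problem $-\Delta Z=pU^{p-1}Z$ in $\mathbb{R}^n_+$ with $\partial_{x_n}Z=0$ on $\partial\mathbb{R}^n_+$, whose kernel is spanned exactly by $Z^0,Z^1,\dots,Z^{n-1}$; hence the coercivity in Proposition \ref{pro1} survives with the smaller $K_{d,\xi}$, the count of constraints matches the count of parameters $(d,\xi)\in\mathbb{R}\times\partial\Omega$, and your Gram-matrix argument closes with no extra cancellation needed. (The list $j=0,1,\dots,n$ in the paper's definition of $K_{d,\xi}$ must be read accordingly.)
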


\begin{proof}
We argue as in Proposition 2.2 of \cite{mp}.\end{proof}

The problem is thus reduced to finding   critical points of $\widetilde{%
J}_{\epsilon } $ and so it is necessary to compute the asymptotic expansion of $%
\widetilde{J}_{\epsilon }$.

\begin{proposition}
\label{pro4}It holds true that
\begin{align*}
& \widetilde{J}_{\epsilon }(d,\xi)=a(\xi)\left[
c_{1}+c_{2} \epsilon \log |\epsilon|+c_3\epsilon  +c_4\mathcal H_a(\xi)|\epsilon|d +c_5\epsilon\ln d+o(\epsilon)\right],
\end{align*}%
$C^{1}$-uniformly on compact sets of $\mathbb R\times \partial\Omega .$
Here $\mathcal H_a(\xi)$ is defined in \eqref{ha}, 
  $c_{i}$  are constants and in particular, $c_{4}$ and $c_{5} $ are positive.
\end{proposition}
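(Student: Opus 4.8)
The plan is to reduce the computation of $\widetilde J_\epsilon(d,\xi)=J_\epsilon(PU_{\delta,\xi}+\phi^\epsilon_{d,\xi})$ to the computation of $J_\epsilon$ at the pure projected bubble $PU_{\delta,\xi}$, and then to expand the latter. For the first reduction I would Taylor expand
\begin{equation*}
J_\epsilon(PU_{\delta,\xi}+\phi)=J_\epsilon(PU_{\delta,\xi})+J'_\epsilon(PU_{\delta,\xi})[\phi]+\tfrac12 J''_\epsilon(PU_{\delta,\xi}+t\phi)[\phi,\phi],
\end{equation*}
with $t\in(0,1)$ and $\phi=\phi^\epsilon_{d,\xi}$. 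Since $\phi\in K^\perp_{d,\xi}$, the linear term equals $(\Pi^\perp_{d,\xi}[PU_{\delta,\xi}-i^*(f_\epsilon(PU_{\delta,\xi}))],\phi)$ and is therefore bounded by $\|PU_{\delta,\xi}-i^*(f_\epsilon(PU_{\delta,\xi}))\|_{H_\epsilon}\,\|\phi\|_{H_\epsilon}$, while the quadratic term is $O(\|\phi\|_{H_\epsilon}^2)$. By Proposition \ref{pro2} we have $\|\phi^\epsilon_{d,\xi}\|_{H_\epsilon}\le c|\epsilon||\ln\epsilon|$, so both remainders are $O(\epsilon^2|\ln\epsilon|^2)=o(\epsilon)$; differentiating in $(d,\xi)$ and using the companion bound on $D_{(d,\xi)}\phi^\epsilon_{d,\xi}$ gives the same conclusion $C^1$-uniformly on compact sets. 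It thus suffices to expand $J_\epsilon(PU_{\delta,\xi})$ with $\delta=|\epsilon|d$.

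To expand $J_\epsilon(PU_{\delta,\xi})$ I would write $a(x)=a(\xi)+\langle\nabla a(\xi),x-\xi\rangle+O(|x-\xi|^2)$ near the concentration point and set $\varphi_{\delta,\xi}:=U_{\delta,\xi}-PU_{\delta,\xi}$, which solves $-\Delta\varphi+\varphi=U_{\delta,\xi}$ in $\Omega$ with the induced Neumann-type data. For the quadratic part, integrating by parts with the defining equation of $PU_{\delta,\xi}$ and the boundary condition $\partial_\nu PU_{\delta,\xi}=0$ yields
\begin{equation*}
\tfrac12\int_\Omega a(|\nabla PU|^2+PU^2)=\tfrac12\int_\Omega a\,U_{\delta,\xi}^p\,PU-\tfrac12\int_\Omega (\nabla a\cdot\nabla PU)\,PU,
\end{equation*}
so that, together with the nonlinear term, $J_\epsilon(PU_{\delta,\xi})$ reduces to integrals of $U_{\delta,\xi}^{p+1}$, $U_{\delta,\xi}^{p+1+\epsilon}$ and of the defect $\varphi_{\delta,\xi}$, each weighted by the Taylor coefficients of $a$ at $\xi$. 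The leading order is $a(\xi)$ times one half of the whole-space bubble energy, the factor $\tfrac12$ reflecting that $\xi\in\partial\Omega$; this produces the constant term $c_1 a(\xi)$.

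The exponent perturbation is handled by writing $U^{p+1+\epsilon}=U^{p+1}e^{\epsilon\ln U}=U^{p+1}(1+\epsilon\ln U+O(\epsilon^2\ln^2U))$ and $\frac{1}{p+1+\epsilon}=\frac{1}{p+1}+O(\epsilon)$. Since $\int_{\mathbb R^n}U_{\delta,\xi}^{p+1}$ is scale invariant while $\int U_{\delta,\xi}^{p+1}\ln U_{\delta,\xi}$ carries a factor $\ln\delta=\ln|\epsilon|+\ln d$, these expansions produce exactly the terms $c_2\epsilon\log|\epsilon|$, $c_3\epsilon$ and $c_5\epsilon\ln d$, with $c_5>0$. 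The first-order geometric correction has two sources: the weight term $\int\langle\nabla a(\xi),x-\xi\rangle\,U_{\delta,\xi}^{p+1}$, whose tangential part is odd and vanishes so that only the normal derivative $\partial_\nu a(\xi)$ survives at order $\delta$, and the interaction of the defect $\varphi_{\delta,\xi}$ with the curvature of $\partial\Omega$, contributing a term proportional to $-H(\xi)\delta$. Flattening the boundary near $\xi$ and retaining the geometry to the relevant order, these combine, after dividing by $a(\xi)$, into $\tfrac{2}{n-1}\frac{\partial_\nu a(\xi)}{a(\xi)}-H(\xi)=\mathcal H_a(\xi)$, giving the term $c_4\mathcal H_a(\xi)|\epsilon|d$ with $c_4>0$.

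The $C^1$-uniformity on compact sets of $\mathbb R\times\partial\Omega$ then follows by differentiating all of the above in $(d,\xi)$ — using $\partial_\delta U_{\delta,\xi}=Z_{\delta,\xi}^0$ and $\partial_{\xi_j}U_{\delta,\xi}=Z_{\delta,\xi}^j$ together with the corresponding bounds on $D_{(d,\xi)}\varphi_{\delta,\xi}$ and $D_{(d,\xi)}\phi^\epsilon_{d,\xi}$ — and verifying that every remainder remains $o(\epsilon)$ uniformly. I expect the main obstacle to be the sharp analysis of the projection defect $\varphi_{\delta,\xi}$ near $\partial\Omega$ and the extraction of the curvature contribution: one must flatten the boundary through a local diffeomorphism, track how the mean curvature $H(\xi)$ enters the defect, and check that the boundary and weight terms assemble \emph{exactly} into $\mathcal H_a(\xi)$ at order $\delta=|\epsilon|d$, since a sign or constant error here would destroy the separation of cases $(i)$ and $(ii)$ in Theorem \ref{main}. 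These boundary estimates are precisely the computations deferred to the Appendix.
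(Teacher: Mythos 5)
Your overall architecture is exactly the paper's: first reduce $\widetilde J_\epsilon(d,\xi)$ to $J_\epsilon(PU_{\delta,\xi})$ up to $o(\epsilon)$ using the bounds of Proposition \ref{pro2} (this is \eqref{j1}, which the paper quotes from Proposition 2.2 of \cite{mp}); then split the energy according to $a(x)=a(\xi)+\nabla a(\xi)\cdot(x-\xi)+O(|x-\xi|^{2})$, so that the constant-weight part produces $c_1$, $c_2\epsilon\ln|\epsilon|$, $c_3\epsilon$, $c_5\epsilon\ln d$ and the curvature term $-c_6\,d|\epsilon|H(\xi)$, while the linear part produces $c_7\,d|\epsilon|\,\partial_\nu a(\xi)$ after the odd tangential components cancel. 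One difference of economy: the paper does not redo the boundary-flattening/defect analysis you sketch for the constant-weight part; it imports that entire expansion, including $c_6$, from Proposition A.1 of Rey--Wei \cite{rw2}, and only computes the new term $I_2$ coming from the anisotropy, see \eqref{j3}--\eqref{j4}.

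The genuine gap sits precisely where you yourself flag the danger: the claim that the two order-$\delta$ contributions ``assemble exactly into $\mathcal H_a(\xi)$'' with $c_4>0$ \emph{is} the quantitative content of the proposition, and you assert it rather than prove it. Since $\mathcal H_a$ in \eqref{ha} carries the specific weight $\frac{2}{n-1}$, one must verify $c_7/c_6=\frac{2}{n-1}$; the paper does this by an explicit computation, reducing $\int_{\mathbb R^n_+}y_n\frac{|y|^2-1}{(1+|y|^2)^n}dy$ to $(n-1)$-dimensional integrals and evaluating everything through the Gamma-function identities for $I^q_p=\int_0^{\infty}r^q(1+r)^{-p}dr$, ending with $c_6=\frac{(n-2)^2}{2(n-3)}\alpha_n^2\omega_{n-2}I^{(n-1)/2}_n$ and $c_7=\frac{(n-2)^2}{(n-1)(n-3)}\alpha_n^2\omega_{n-2}I^{(n-1)/2}_n$, whence \eqref{j5}. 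Moreover your bookkeeping of the $\partial_\nu a$ coefficient is not yet correct: at order $\delta$ the linear part of $a$ multiplies the full energy density $\frac12|\nabla U|^2-\frac{1}{p+1}U^{p+1}$ (the $u^2$ piece is higher order), not $U^{p+1}$ alone; equivalently, in your integration-by-parts reformulation the cross term $-\frac12\int(\nabla a\cdot\nabla PU)\,PU$ also contributes at order $\delta$ through a boundary term, and you drop it in the final assembly. Either slip changes $c_7$, hence the ratio $c_7/c_6$, hence the curvature combination in front of $d|\epsilon|$ --- and an error there would misplace the zero of $\nabla_d\widetilde J_\epsilon$ and invalidate the dichotomy $(i)/(ii)$ of Theorems \ref{main1} and \ref{main}. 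So: right plan, same route as the paper, but incomplete exactly at the computation the statement is about.
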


\begin{proof}
The proof is postponed to Appendix.
\end{proof}

\subsection{Proof of Theorem \ref{main}}.

We apply Proposition \ref{pro3}. Then it is enough to prove that the reduced energy $\widetilde{J}_{\epsilon }$ has a critical point if $\epsilon$ is small enough.
By Proposition \ref{pro4} we deduce that
\begin{equation*}
\nabla _\xi \widetilde{J}_{\epsilon }(d,\xi)=c_1 \nabla _\xi a(\xi)+o(1).
\end{equation*}
Moreover
\begin{equation*}
\nabla _d \widetilde{J}_{\epsilon }(d,\xi)=\[c_4 \mathcal H_a(\xi)   +c_5{1\over d}+o(1)\]\epsilon \ \hbox{if}\ \epsilon>0,
\end{equation*}
and
\begin{equation*}
\nabla _d \widetilde{J}_{\epsilon }(d,\xi)=\[-c_4{\mathcal H_a(\xi)}   +c_5{1\over d}+o(1)\]\epsilon \ \hbox{if}\ \epsilon<0.
\end{equation*}
Let $\xi_0$ be a $C^1$-stable critical point of $a$ constrained on the bondary.
By Brouwer degree theory it follows that if either
$d_0:=-{c_5\over c_4}{1\over \mathcal H_a(\xi_0) }$ if $\epsilon>0$ or $d_0:= {c_5\over c_4}{1\over \mathcal H_a(\xi_0) }$ if $\epsilon<0,$ if $\epsilon$ is small enough there exists
$(d_\epsilon,\xi_\epsilon)$ such that
 $\nabla_{(d,\xi)}\widetilde{J}_{\epsilon }(d_\epsilon,\xi_\epsilon)=0$, $d_\epsilon\to d_0$ and $\xi_\epsilon\to \xi_0$ as $\epsilon\to0.$ That proves our claim.

 \appendix
 \section{}
\subsection{A local parametrization of the boundary}
 Take $\xi\in\partial\Omega$. Without loss of generality we may take $\xi=0$ and the unit inward normal of $\partial\Omega$ at $\xi$   directed along the $x_n$-axis.
Denote $x'=(x_1,x_2,\dots,x_{n-1})$, $B'(0,r)=\{x'\in\mathbb R^{n-1}:|x'|<r\}$, and $\Omega _1:=\Omega\cap B(\xi,r)$, where 
$B(\xi,r)=\{x\in\mathbb R^{n }:|x-\xi|<r\}$. Since $\partial\Omega$ is smooth, we can find a constant $r>0$ such that $\partial\Omega\cap B(\xi,r)$ can be 
represented by the graph of a smooth function $\rho_\xi : B'(0,r)\to \mathbb R$, where $\rho_\xi(0)=0$ and $\nabla \rho_\xi(0)=0$, and
$$ \Omega\cap B(\xi,r)=\{(x',x_n)\in B(\xi,r) : x_n>\rho_\xi(x')\}.$$ Moreover, we may write
$$ \rho_\xi(x')=\frac{1}{2}\sum_{i=1}^{n-1}k_ix_i^2+O(|x|^3)$$ where $k_i, i=1,2,\dots,n-1$, are the principle 
curvatures at $\xi$. The mean curvature is defined by
$$ H(\xi) = \frac{1}{n-1}\sum_{i-1}^{n-1}k_i$$

\subsection{The expansion of the ansatz}
From Lemma A.1 of \cite{rw2} we have that  
\begin{equation}\label{espu1} P U_{\delta,\xi}(x)=  U_{\delta,\xi}(x)-\delta^{\frac{4-n}{2}}\varphi_0\(\frac{x-\xi}{\delta}\)+O\(\delta^{\frac{6-n}{2}} \)\end{equation}
where   $\varphi_0$ solves

 \begin{equation}
  \label{E.IstApprox}
\left\{\begin{aligned}
      \Delta \varphi_0&=0  &&\text{in } \mathbb R^n_+\\
      \frac{\partial \varphi_0}{\partial x_n} &=\alpha_n \frac{n-2}{2}\sum_{i=1}^{n-1}\frac{k_i x_i^2}{(1+|x|^2)^{n/2}} &&\text{on }\partial R^n_+\\
      \varphi_0&\to 0 &&\text{ as } |x|\to\infty
    \end{aligned}
  \right.
\end{equation}

Using Green's expression $\varphi_0$ can be expressed as 
$$\varphi_0(x)=\alpha_n
\frac{1}{\omega_{n-1}}\sum_{i=1}^{n-1}k_i\int_{\mathbb R^{n-1}}\frac{y_i^2}{(1+|y'|^2)^{n/2}} \frac{1}{|x-y'|^{N-2}}dy'$$
where $\omega_{n-1}$ is the surface measure of the unit sphere in $\mathbb R^n$. Moreover
\begin{equation}
 |\varphi_0(x)| \le\frac{C}{(1+|x|)^{n-3}}.\label{espu2}
\end{equation}

 \subsection{Proof of Proposition \ref{pro2}}

 As it is usual  equation \eqref{es1} turns out to be  equivalent to
\begin{equation*}
L_{d,\xi}(\phi)=N_{d,\xi}(\phi)+R_{d,\xi},\end{equation*}
where the linear operator $L_{d,\xi}$ is defined in \eqref{elle}, the error-term is
\begin{equation}\label{re}
 R_{d,\xi}:=\Pi^\perp_{d,\xi}\left\{i^* \left[ f _\eps\left(PU_{\delta,\xi}\right)\right]-PU_{\delta,\xi}\right\}
 \end{equation}
and the higher order term is 
\begin{equation*}
 N_{d,\xi}(\phi):=\Pi^\perp_{d,\xi}\left\{
 i^* \[  f_\eps \left(PU_{\delta,\xi}+\phi\right)-f_\eps \left(PU_{\delta,\xi}\right)-f'_\eps \left(PU_{\delta,\xi} \right)\phi 
\right] \right\}
 \end{equation*}
By Proposition \ref{pro1}, using Lemma \ref{re1} and the usual contraction mapping argument, the claim follows
(see, for instance, Proposition 2.1 of \cite{mp}).

We recall the following useful lemma.
\begin{lemma}\label{yyl}
For any $a>0$ and $b\in\mathbb R$ we have
$$\left||a+b|^\beta- a^\beta\right|\le\left\{ 
\begin{aligned}&c(\beta)\min\{|b|^\beta,a^{\beta-1}|b|\}\ \hbox{if}\ 0<\beta<1\\
&c(\beta)\(|b|^\beta+a^{\beta-1}|b|\)\ \hbox{if}\ \beta>1\\
\end{aligned}
\right.$$
and
$$\left||a+b|^\beta(a+b)-a^{\beta+1}-(1+\beta)a^\beta b\right|\le\left\{ 
\begin{aligned}&c(\beta)\min\{|b|^{\beta+1},a^{\beta-1}b^2\}\ \hbox{if}\ 0<\beta<1\\
&c(\beta)\max\{|b|^{\beta+1},a^{\beta-1}b^2\}\ \hbox{if}\ \beta>1\\
\end{aligned}
\right.$$
\end{lemma}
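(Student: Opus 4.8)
The plan is to establish both inequalities by reducing to a single real variable through scaling and then analysing the resulting one-variable functions on separate regimes. Note first that both inequalities are positively homogeneous: replacing $(a,b)$ by $(ta,tb)$ with $t>0$ multiplies the left-hand side and each term on the right of the first inequality by $t^\beta$, and those of the second by $t^{\beta+1}$. I may therefore normalise $a=1$ and set $s:=b\in\mathbb R$, so that it suffices to bound the functions
\begin{equation*}
g_1(s):=\bigl|\,|1+s|^\beta-1\,\bigr|,\qquad
g_2(s):=\bigl|\,|1+s|^\beta(1+s)-1-(1+\beta)s\,\bigr|
\end{equation*}
by the $\min$ (when $0<\beta<1$) or the $\max$/sum (when $\beta>1$) of the quantities $|s|^\beta,\ |s|$ and of $|s|^{\beta+1},\ s^2$ respectively.

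First I would treat the regime $|s|\le\tfrac12$. Here $1+s\ge\tfrac12$ stays bounded away from the singular value $0$, so $t\mapsto t^\beta$ is smooth on the relevant interval and Taylor's formula with remainder applies. Since the first-order Taylor polynomial of $(1+s)^\beta$ at $s=0$ is $1+\beta s$, and that of $(1+s)^{\beta+1}$ is $1+(\beta+1)s+\tfrac12\beta(\beta+1)s^2$, this yields $g_1(s)\le c(\beta)|s|$ and $g_2(s)\le c(\beta)s^2$ on this range. These are precisely the terms $a^{\beta-1}|b|$ and $a^{\beta-1}b^2$ after undoing the normalisation. Moreover, for $|s|\le\tfrac12$ one has $|s|\le|s|^\beta$ and $s^2\le|s|^{\beta+1}$ when $\beta<1$ (so these are indeed the minima), while $s^2\ge|s|^{\beta+1}$ when $\beta>1$ (so $s^2$ is the maximum); this matches the selection prescribed in the statement.

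Next, for $|s|\ge\tfrac12$ I would argue by asymptotics combined with continuity. As $|s|\to\infty$ one has $g_1(s)\sim|s|^\beta$ and $g_2(s)\sim|s|^{\beta+1}$, while the claimed bounding functions behave like $|s|^\beta$ and $|s|^{\beta+1}$; hence the quotients $g_1(s)/\min\{|s|^\beta,|s|\}$ and $g_2(s)/\min\{|s|^{\beta+1},s^2\}$ (together with their $\beta>1$ analogues) are continuous on $\{|s|\ge\tfrac12\}$ with finite limits at $\pm\infty$, and are therefore bounded there. Taking $c(\beta)$ to be the larger of the constants produced in the two regimes then gives the inequalities in full.

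The only genuinely delicate value is $s=-1$, that is $a+b=0$, where $t\mapsto t^\beta$ has infinite slope for $\beta<1$ and no derivative estimate is available. This causes no difficulty, however: at $s=-1$ the bounding functions $\min\{|s|^\beta,|s|\}$ and $\min\{|s|^{\beta+1},s^2\}$ both equal $1$ and stay bounded away from $0$ in a neighbourhood, so only the trivial boundedness of $g_1,g_2$ there is required, which is supplied by continuity; the sharp rate estimates via Taylor are needed only near $s=0$, where the base $1+s$ is positive and the function is smooth. I expect the main effort to be purely bookkeeping, namely verifying in each regime which of the two competing terms is the smaller (resp.\ larger), so as to confirm that the $\min$ (resp.\ $\max$) prescription yields a valid upper bound.
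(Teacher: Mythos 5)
Your proposal is correct and complete. Note, however, that the paper itself offers \emph{no} proof of this lemma: it is introduced with ``We recall the following useful lemma'' and treated as a standard elementary fact, so there is no argument in the paper to compare against line by line. Your route --- exploit the homogeneity (degree $\beta$ for the first inequality, degree $\beta+1$ for the second) to normalise $a=1$, then split into the regime $|s|\le\tfrac12$, handled by Taylor's formula with Lagrange remainder (valid since $t\mapsto t^\beta$ and $t\mapsto t^{\beta+1}$ are smooth on $[\tfrac12,\tfrac32]$, with the second derivative of $t^{\beta+1}$ bounded there even for $\beta<1$), and the regime $|s|\ge\tfrac12$, handled by continuity of the quotients plus their finite limits at $\pm\infty$ on a set where the comparison functions are bounded away from zero --- is exactly the standard way such estimates are proved, and every step checks out. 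Your bookkeeping of which term realises the $\min$ or $\max$ is also right: for $0<\beta<1$ one has $|s|\le|s|^\beta$ and $s^2\le|s|^{\beta+1}$ on $|s|\le1$ with the inequalities reversing for $|s|\ge1$, and for $\beta>1$ the reverse, which is consistent with the asymptotics $g_1\sim|s|^\beta$, $g_2\sim|s|^{\beta+1}$ at infinity (for the $\min$ bounds one also checks $g_1(s)/|s|\to0$ and $g_2(s)/s^2\to0$ when $\beta<1$, which holds since $\beta<1$ and $\beta+1<2$). Your observation that the non-smooth point $s=-1$ needs no rate estimate, only boundedness, because the comparison functions are of order one there, is the right way to dispose of the only delicate point. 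Two cosmetic remarks: the polynomial $1+(\beta+1)s+\tfrac12\beta(\beta+1)s^2$ you quote is the \emph{second}-order Taylor polynomial of $(1+s)^{\beta+1}$, not the first-order one (what you actually need is that the remainder after the \emph{linear} term is $O(s^2)$, which is what your estimate uses); and for $0<\beta<1$ the bound $\left||a+b|^\beta-a^\beta\right|\le|b|^\beta$ follows with constant $1$ directly from the subadditivity of $t\mapsto t^\beta$, which would let you skip the compactness argument for that particular piece, though your uniform treatment is perfectly valid.
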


Let us estimate the error term \eqref{re}.

\begin{lemma}\label{re1}
It holds  
 $$  \left\|  R_{d,\xi}\right\| _{H_\epsilon}=O\(|\eps|\ln|\eps|\).$$
\end{lemma}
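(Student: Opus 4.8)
The plan is to collapse the bracket inside $\Pi^\perp_{d,\xi}$ into a single application of $i^*$ and then invoke the continuity estimates \eqref{i1} and \eqref{i2}, thereby reducing the $H_\epsilon$-estimate to weighted Lebesgue estimates of an explicit residual. Since $PU_{\delta,\xi}>0$ by the maximum principle, $f_\epsilon(PU_{\delta,\xi})=(PU_{\delta,\xi})^{p+\epsilon}$. Writing out $-\mathrm{div}(a\nabla PU_{\delta,\xi})+aPU_{\delta,\xi}=-a\Delta PU_{\delta,\xi}-\nabla a\cdot\nabla PU_{\delta,\xi}+aPU_{\delta,\xi}$ and using the defining equation $-\Delta PU_{\delta,\xi}+PU_{\delta,\xi}=U_{\delta,\xi}^p$ yields $PU_{\delta,\xi}=i^*\big(U_{\delta,\xi}^p-\tfrac1a\nabla a\cdot\nabla PU_{\delta,\xi}\big)$. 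Hence
\begin{equation*}
i^*[f_\epsilon(PU_{\delta,\xi})]-PU_{\delta,\xi}=i^*(g),\qquad g=g_1+g_2+g_3,
\end{equation*}
with $g_1:=(PU_{\delta,\xi})^{p+\epsilon}-(PU_{\delta,\xi})^p$, $g_2:=(PU_{\delta,\xi})^p-U_{\delta,\xi}^p$ and $g_3:=\tfrac1a\nabla a\cdot\nabla PU_{\delta,\xi}$. As $\Pi^\perp_{d,\xi}$ is bounded on $H_\epsilon$ uniformly in the parameters, it suffices to bound $\|i^*(g)\|_{H_\epsilon}$, and by \eqref{i1}–\eqref{i2} this reduces to estimating $|g_k|_{\frac{2n}{n+2}}$ and $|g_k|_{\frac{ns_\epsilon}{n+2s_\epsilon}}$ for each $k$.

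I would first treat the dominant term $g_1$, which carries the logarithm. Writing $g_1=(PU_{\delta,\xi})^p\big((PU_{\delta,\xi})^\epsilon-1\big)$ and using $(PU_{\delta,\xi})^\epsilon-1=\epsilon\log PU_{\delta,\xi}+O\big(\epsilon^2\log^2 PU_{\delta,\xi}\big)$, the key observation is that after the rescaling $x=\xi+\delta y$ one has $\log PU_{\delta,\xi}=-\tfrac{n-2}{2}\log\delta+O(1)$ on the bulk of the bubble, while $|U_{\delta,\xi}^p|_{\frac{2n}{n+2}}=O(1)$ because $p\cdot\tfrac{2n}{n+2}=\tfrac{2n}{n-2}$ makes the resulting integral scale-invariant. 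Since $\delta=|\epsilon|d$ with $d$ in a compact set, $|\log\delta|\sim|\ln|\epsilon||$, so this contribution is exactly of order $|\epsilon|\,|\ln|\epsilon||$; the part coming from $\log U_{1,0}$ and the quadratic remainder are $O(|\epsilon|)$.

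Next I would dispose of $g_2$ and $g_3$, showing both are $O(|\epsilon|)$ and hence subdominant. For $g_2$ I would apply the first inequality of Lemma \ref{yyl} with $\beta=p$, giving $|g_2|\le c\big(|PU_{\delta,\xi}-U_{\delta,\xi}|^p+U_{\delta,\xi}^{p-1}|PU_{\delta,\xi}-U_{\delta,\xi}|\big)$, and then insert the expansion \eqref{espu1}–\eqref{espu2}: since $PU_{\delta,\xi}-U_{\delta,\xi}=-\delta^{\frac{4-n}{2}}\varphi_0\big(\tfrac{\cdot-\xi}{\delta}\big)+O\big(\delta^{\frac{6-n}{2}}\big)$ is of relative size $\delta$ on the bubble, one obtains $|g_2|_{\frac{2n}{n+2}}=O(\delta)$. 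For $g_3$, boundedness of $\tfrac1a\nabla a$ on $\overline\Omega$ reduces matters to $|\nabla PU_{\delta,\xi}|_{\frac{2n}{n+2}}$, and the scaling computation gives $|\nabla U_{\delta,\xi}|_{\frac{2n}{n+2}}=O(\delta)$; here convergence of the rescaled integral $\int|\nabla U_{1,0}|^{\frac{2n}{n+2}}$ is precisely what forces $n\ge5$. With $\delta=|\epsilon|d$ both terms are $O(|\epsilon|)$, so the $\frac{2n}{n+2}$-norm of $g$ is $O(|\epsilon|\,|\ln|\epsilon||)$, which already settles the case $\epsilon\le0$ since there $\tfrac{ns_\epsilon}{n+2s_\epsilon}=\tfrac{2n}{n+2}$.

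The main obstacle will be carrying all three estimates through in the stronger norm $|\cdot|_{\frac{ns_\epsilon}{n+2s_\epsilon}}$ needed in the supercritical range $\epsilon>0$, where $s_\epsilon=\tfrac{2n}{n-2}+\epsilon\tfrac n2$. There one must recompute the $\delta$-powers with the shifted exponent, verify that the relevant rescaled integrals still converge — again leaning on $n\ge5$ and on the precise choice of $s_\epsilon$ — and confirm that the logarithmic factor from $g_1$ reemerges with the same order $|\epsilon|\,|\ln|\epsilon||$ rather than worse. Once this exponent bookkeeping is complete, summing the three contributions gives $\|R_{d,\xi}\|_{H_\epsilon}=O(|\epsilon|\,|\ln|\epsilon||)$, as claimed.
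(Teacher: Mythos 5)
Your proposal is correct and follows essentially the same route as the paper: the identity $PU_{\delta,\xi}=i^{*}\bigl(U_{\delta,\xi}^{p}-\tfrac{1}{a}\nabla a\cdot\nabla PU_{\delta,\xi}\bigr)$ yields exactly the paper's three-term decomposition ($g_{1},g_{2},g_{3}$ match the paper's $I_{1},I_{2},I_{3}$), with $g_{2}$ handled via Lemma \ref{yyl} and \eqref{espu1}--\eqref{espu2}, $g_{3}$ by scaling with $n\ge 5$, and the norms $|\cdot|_{\frac{ns_{\epsilon}}{n+2s_{\epsilon}}}$ reduced to $|\cdot|_{\frac{2n}{n+2}}$ since $\tfrac{ns_{\epsilon}}{n+2s_{\epsilon}}=\tfrac{2n}{n+2}+O(\epsilon)$. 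The only difference is cosmetic: you compute the logarithmic term $g_{1}$ directly via $(PU_{\delta,\xi})^{\epsilon}-1=\epsilon\log PU_{\delta,\xi}+O(\epsilon^{2}\log^{2}PU_{\delta,\xi})$, whereas the paper simply cites Proposition 2 of \cite{r1} for the same bound $O(|\epsilon\ln\epsilon|)$.
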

\begin{proof}
 
 By the definition of $i^*,$ we immediately get  that
 $$PU_{\delta,\xi}=i^*\left[f_0(U_{\delta,\xi})-{\nabla a(x)\over a(x)}\nabla PU_{\delta,\xi}\right].$$
 Then  by \eqref{i1} and \eqref{i2} we get
\begin{align}\label{R0}
 \left\|  R_{d,\xi}\right\|_{H_\epsilon}=&O\( |f_\eps(P U_{\delta,\xi})-f_0(PU_{\delta,\xi}) |_{ 2n \over n+2 }+ |f_\eps(P U_{\delta,\xi})-f_0(PU_{\delta,\xi}) |_{ ns_\epsilon \over n+2s_\epsilon}\) \nonumber \\ &+O\(  |f_0(P U_{\delta,\xi})-f_0(U_{\delta,\xi}) |_{ 2n \over n+2 }+|f_0(P U_{\delta,\xi})-f_0(U_{\delta,\xi}) |_{ ns_\epsilon \over n+2s_\epsilon}\) \nonumber \\ &+
O\( |{\nabla a(x)\over a(x)}\nabla PU_{\delta,\xi} |_{2n\over n+2}+ |{\nabla a(x)\over a(x)}\nabla PU_{\delta,\xi} |_{{ ns_\epsilon \over n+2s_\epsilon}}\) \nonumber \\ &=:I_1+I_2+I_3 .\end{align}
Since ${ ns_\epsilon \over n+2s_\epsilon}={2n\over n+2}+O(\epsilon),$ we only compute the $L^{2n\over n+2}-$norms.
  Using the same arguments of  Proposition 2 of \cite{r1} we can estimate $I_1$ as
  \begin{align}\label{I1}
  |f_\eps(P U_{\delta,\xi})-f_0(PU_{\delta,\xi}) |_{ 2n \over n+2 }=O\(|\epsilon\ln\epsilon|\).\end{align}
Let us estimate $I_2.$
By Lemma \ref{yyl}, \eqref{espu1} and \eqref{espu2} we deduce that
 \begin{align}\label{I2} |f_0(P U_{\delta,\xi})-f_0(U_{\delta,\xi}) |_{ 2n \over n+2 }&=O\( | P U_{\delta,\xi} -U_{\delta,\xi}  |^p_{ 2n \over n-2 }\)+O\(  U_{\delta,\xi}^{p-1}\(P U_{\delta,\xi} -U_{\delta,\xi} \) | _{ 2n \over n+2 }\)\nonumber\\ &= O\( \delta \) \hbox{(because $n\ge5$).}
 \end{align}
Finally, we estimate $I_3 $ as
 \begin{align}\label{I3}
&\left|{\nabla a(x)\over a(x)}\nabla PU_{\delta,\xi} \right|_{2n\over n+2}= O\( \delta \)\ \hbox{(because $n\ge5$).}\end{align}
By \eqref{R0}, \eqref{I1}, \eqref{I2} and \eqref{I3} the claim follows.

\end{proof}
 \subsection{Proof of Proposition \ref{pro4}}
 
First of all it is quite standard to prove that (see, for instance, Proposition 2.2 of \cite{mp})
\begin{equation}\label{j1}J_{\epsilon
}(PU_{\delta,\xi}+\phi _{d,\xi}^{\epsilon })=J_{\epsilon
}(PU_{\delta,\xi})+o(\epsilon).\end{equation}
Therefore, we only have to estimate $J_{\epsilon
}(PU_{\delta,\xi}).$
We remark that
\begin{equation}\label{j2}\begin{aligned}
J_{\epsilon
}(PU_{\delta,\xi})=&a(\xi)\[{1\over2}\int\limits_\Omega\(|\nabla u|^2+u^2\)dx-{1\over p+1+\epsilon}\int\limits_\Omega (u^+)^{p+1+\epsilon }dx\]\\
&+\int\limits_\Omega\[a(x)-a(\xi)\]\[{1\over2}\(|\nabla u|^2+u^2\) -{1\over p+1+\epsilon} (u^+)^{p+1+\epsilon }\]dx\\
:=&I_1+I_2
\end{aligned}\end{equation}
$I_1$ was estimated in Proposition A.1 of \cite{rw2} (note that in our case the expansion in \cite{rw2} also contain $\alpha_n^2$) as
\begin{equation}\label{j3}
I_1=a(\xi)\[c_1+c_2\epsilon\ln|\epsilon|+c_3 \epsilon +c_5\epsilon\ln d- c_6  d|\epsilon|H(\xi)+o(\epsilon)\]\end{equation}
where  $c_i$ are constants. In particular $c_5$ is positive and 
$$c_6:={(n-2)^2\over n-3}\alpha_n^2\int\limits_{\mathbb R^{n-1}}{|y'|^2\over (1+|y'|^2)^n}dy'$$
Let us estimate $I_2$ taking into account that 
$$a(x)=a(\xi)+\nabla a(x)(x-\xi)+O\(|x-\xi|^2\).$$
Then we have 
\begin{equation}\label{j4}\begin{aligned}I_2&=\int\limits_\Omega\[\nabla a(x)(x-\xi)+O\(|x-\xi|^2\)\]\[{1\over2}\(|\nabla u|^2+u^2\) -{1\over p+1+\epsilon} (u^+)^{p+1+\epsilon }\]dx\\
&\hbox{(scaling $x-\xi=\delta y$)}\\
&=\delta \[\nabla a(\xi)\int\limits_{\mathbb R^n_+} y\[{1\over2} |\nabla U|^2  -{1\over p+1 } U^{p+1 }\]dy+o(1)\]\\
&=\delta \[\partial_\nu a(\xi)\int\limits_{\mathbb R^n_+} y_n\[{(n-2)^2\over2}\alpha_n^2 {|y|^2\over (1+|y|^2)^n} -{1\over p+1 } \alpha_n^{p+1 }{1\over (1+|y|^2)^n}\]dy+o(1)\]\\
&=c_7 d |\eps| \partial_\nu a(\xi) +o(\epsilon),
\end{aligned}\end{equation}
where
$$c_7:={(n-2)^2\over2}\alpha_n^2 \int\limits_{\mathbb R^n_+} y_n{|y|^2-1\over (1+|y|^2)^n}dy.$$
We need to compare the two constants $c_6$ and $c_7.$
First of all, we have (since $n\ge5$)
$$\int\limits_{\mathbb R^n_+} y_n{|y|^2-1\over (1+|y|^2)^n}dy={1\over 2(n-1)(n-2)}\int\limits_{\mathbb R^{n-1}} {1\over (1+|y'|^2)^{n-2}}dy'
+{1\over 2(n-1)}\int\limits_{\mathbb R^{n-1}} {|y'|^2-1\over (1+|y'|^2)^{n-1}}dy'$$
For any positive real numbers $p$ and $q$ such that $p-q>1$, we let
\begin{equation*}
I^q_p=\int_0^{+\infty}\frac{r^q}{\(1+r\)^p}dr\,=2\int_0^{+\infty}\frac{s^{2q+1}}{\(1+s^2\)^p}ds\,={\Gamma(q +1)\Gamma(p-q -1)\over\Gamma(p)},
\end{equation*}
where $\Gamma$ is the Gamma Eulero function.
In particular, we have
\begin{equation*}
I^q_{p+1}=\frac{p-q-1}{p}I^q_p\quad\text{and}\quad I^{q+1}_{p+1}=\frac{q+1}{p-q-1}I^q_{p+1}\,.
\end{equation*}
 Then we can write
 $$c_6 ={(n-2)^2\over 2(n-3)}\alpha_n^2\omega_{n-2}I^{n-1\over2}_n. $$
 where $\omega_{n-2}$ is the measure of the $(n-2)-$dimensional unit sphere.
 An easy computation leads to
$$\int\limits_{\mathbb R^{n-1}} {1\over (1+|y'|^2)^{n-2}}dy'={1\over 2}\omega_{n-2}I^{n-3\over2}_{n-2}= {2(n-2)\over n-3}\omega_{n-2}I^{n-1\over2}_{n },$$
$$\int\limits_{\mathbb R^{n-1}} {|y'|^2\over (1+|y'|^2)^{n-1}}dy'={1\over 2}\omega_{n-2}I^{n-1\over2}_{n-1}= {n-1\over n-3}\omega_{n-2}I^{n-1\over2}_{n }$$
and
$$\int\limits_{\mathbb R^{n-1}} {1\over (1+|y'|^2)^{n-1}}dy'={1\over 2}\omega_{n-2}I^{n-3\over2}_{n-1}= \omega_{n-2}I^{n-1\over2}_{n }$$
Then
$$c_7={ (n-2)^2 \over (n-1)(n-3) }\alpha_n^2\omega_{n-2}I^{n-1\over2}_n.$$
By collecting, the previous estimates, we have that
\begin{equation}\label{j5}- c_6 d|\epsilon|a(\xi)H(\xi)+c_7 d |\eps| \partial_\nu a(\xi) ={ (n-2)^2 \over 2( n-3 )}\alpha_n^2\omega_{n-2}I^{n-1\over2}_n d |\epsilon|a(\xi)\( {2\over n-1}{\partial_\nu a(\xi) \over a(\xi)}
- H(\xi)\).\end{equation} 

By \eqref{j1}, \eqref{j2}, \eqref{j3}, \eqref{j4} and \eqref{j5} the claim follows.

\section*{Acknowledgments} The first author is supported by a postdoctoral fellowship under CONACYT grant 237661 (Mexico). The second author is supported by GNAMPA and Sapienza Fondi di Ricerca.

\end{document}